\definecolor{sepia}{cmyk}{0, 0.83, 1, 0.70}
\newcommand\shorttitle{The Optimal Error Bound for the Method of Simultaneous Projections}
\newcommand\shortauthors{S. Reich and R. Zalas}
\newtheorem{theorem}{Theorem}
\newtheorem{corollary}[theorem]{Corollary}
\newtheorem{lemma}[theorem]{Lemma}
\newtheorem{fact}[theorem]{Fact}
\theoremstyle{definition}
\newtheorem{definition}[theorem]{Definition}
\newtheorem{example}[theorem]{Example}
\newtheorem{remark}[theorem]{Remark}
\renewenvironment{proof}[1][\proofname]
{\par
	\pushQED{$\blacksquare$} 
	\normalfont\topsep6\p@\@plus6\p@\relax
	\trivlist
	\item[\hskip\labelsep\bfseries#1\@addpunct{.}]
	\ignorespaces}
{\popQED \endtrivlist\@endpefalse}
\DeclareMathOperator*{\fix}{Fix}
\begin{document}

\title{\vspace{-5em}\textbf{ \Large The Optimal Error Bound\\ for the Method of Simultaneous Projections}}

\author{Simeon Reich\thanks{Department of Mathematics, The Technion -- Israel Institute of Technology, 3200003 Haifa, Israel}
\thanks{\Letter \ sreich@tx.technion.ac.il} \and
Rafa\l\ Zalas\footnotemark[1]
\thanks{\Letter \ zalasrafal@gmail.com}}

\maketitle

\begin{abstract}
In this paper we find the optimal error bound (smallest possible estimate, independent of the starting point) for the linear convergence rate of the simultaneous projection method applied to closed linear subspaces in a real Hilbert space. We achieve this by computing the norm of an error operator which we also express in terms of the Friedrichs number. We compare our estimate with the optimal one provided for the alternating projection method by Kayalar and Weinert (1988). Moreover, we relate our result to the alternating projection formalization of Pierra (1984) in a product space. Finally, we adjust our results to closed affine subspaces and put them in context with recent dichotomy theorems.

\smallskip
\noindent \textbf{Key words and phrases:} Linear rate of convergence, optimal error bound, simultaneous projection method.

\smallskip
\noindent \textbf{2010 Mathematics Subject Classification:} 41A25, 41A28, 41A44, 41A65.

\end{abstract}

\section{Introduction}\label{sec:intro}
Let $M_1,\ldots,M_r$ be closed and linear subspaces of a real Hilbert space $\mathcal H$ and let $M:=\bigcap_{i=1}^r M_i$. By $P_{C}$ we denote the metric projection onto a nonempty, closed and convex set $C\subseteq\mathcal H$. In this paper we consider simultaneous and cyclic projection methods. The following two theorems are well known:
\begin{theorem}[von Neumann \cite{Neumann1949} and Halperin \cite{Halperin1962}]\label{th:NeumannHalperin}
For each $x\in\mathcal H$,
\begin{equation}
\lim_{k\rightarrow\infty}\left\|\left(P_{M_r}\ldots P_{M_1}\right)^k (x)-P_M(x)\right\| = 0.
\end{equation}
\end{theorem}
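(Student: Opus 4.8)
The plan is to establish the stronger statement that $T^{k}\to P_{M}$ in the strong operator topology, where $T:=P_{M_{r}}\cdots P_{M_{1}}$. First I would record the elementary properties of $T$: it is linear with $\|T\|\le 1$; it fixes $M$ pointwise, since $M\subseteq M_{i}$ for every $i$; and, because $M\subseteq M_{i}$ forces $M_{i}^{\perp}\subseteq M^{\perp}$, each $P_{M_{i}}$, and hence $T$, maps $M^{\perp}$ into itself. Writing $x=P_{M}x+(x-P_{M}x)$ with $x-P_{M}x\in M^{\perp}$, it therefore suffices to prove $T^{k}y\to 0$ for $y\in M^{\perp}$; replacing $\mathcal H$ by $M^{\perp}$ and each $M_{i}$ by $M_{i}\cap M^{\perp}$, I may assume that $M=\bigcap_{i}M_{i}=\{0\}$, so that the goal becomes $T^{k}\to 0$ strongly.

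The argument then hinges on two facts. The first is asymptotic regularity: putting $x_{0}:=w$ and $x_{i}:=P_{M_{i}}x_{i-1}$, the Pythagorean identity $\|v\|^{2}=\|P_{M_{i}}v\|^{2}+\|v-P_{M_{i}}v\|^{2}$ telescopes over one cycle into $\|w\|^{2}-\|Tw\|^{2}=\sum_{i=1}^{r}\|x_{i-1}-x_{i}\|^{2}$; since $w-Tw=\sum_{i=1}^{r}(x_{i-1}-x_{i})$, the Cauchy--Schwarz inequality yields $\|w-Tw\|^{2}\le r\sum_{i=1}^{r}\|x_{i-1}-x_{i}\|^{2}=r\bigl(\|w\|^{2}-\|Tw\|^{2}\bigr)$, and summing this over the successive cycles $w,Tw,T^{2}w,\dots$ telescopes to $\sum_{k\ge 0}\|T^{k}w-T^{k+1}w\|^{2}\le r\|w\|^{2}<\infty$; in particular $\|T^{k}w-T^{k+1}w\|\to 0$ for every $w$. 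The second fact is that $\image(I-T)$ is dense. Indeed $T^{*}=P_{M_{1}}\cdots P_{M_{r}}$, and a chain of Pythagorean identities shows that $\fix(T^{*})=\bigcap_{i}M_{i}$: if $T^{*}z=z$ then, since the norm cannot increase along the product, every intermediate norm equals $\|z\|$, which forces $z\in M_{i}$ for every $i$; after the reduction this gives $\fix(T^{*})=\{0\}$, whence $\overline{\image(I-T)}=\bigl(\ker(I-T^{*})\bigr)^{\perp}=\{0\}^{\perp}=\mathcal H$.

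To conclude, I would consider the linear subspace $L:=\{z\in\mathcal H:\|T^{k}z\|\to 0\}$. It is closed: if $z_{n}\to z$ with $z_{n}\in L$, then for large $k$ one has $\|T^{k}z\|\le\|z-z_{n}\|+\|T^{k}z_{n}\|$, which can be made arbitrarily small because $\sup_{k}\|T^{k}\|\le 1$. By the first fact, $w-Tw\in L$ for every $w$, since $T^{k}(w-Tw)=T^{k}w-T^{k+1}w\to 0$; hence $L\supseteq\image(I-T)$, which is dense by the second fact, so $L=\mathcal H$. Undoing the reduction, $T^{k}x=P_{M}x+T^{k}(x-P_{M}x)\to P_{M}x$ for every $x\in\mathcal H$, which is the assertion.

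I expect the one genuinely non-routine point to be the passage from the essentially automatic asymptotic regularity $\|T^{k}w-T^{k+1}w\|\to 0$ to actual convergence of the iterates; this is exactly where linearity of the subspaces enters, through the density of $\image(I-T)$, i.e.\ through $\fix(T^{*})=\{0\}$. For $r=2$ this is von Neumann's original theorem, and the scheme above treats arbitrary $r$ with no additional effort, which is Halperin's contribution.
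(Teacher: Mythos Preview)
The paper does not prove this statement at all: Theorem~\ref{th:NeumannHalperin} is quoted in the Introduction as a classical result, with references to von Neumann and Halperin, and is used only as background. There is therefore nothing in the paper to compare your argument to.

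That said, your proof is correct and is essentially one of the standard routes to Halperin's theorem. The reduction to $M=\{0\}$ via the invariance of $M^{\perp}$ under each $P_{M_i}$ is valid (and the identification $P_{M_i}\!\restriction_{M^{\perp}}=P_{M_i\cap M^{\perp}}$ holds because $M_i=(M_i\cap M^{\perp})\oplus M$). The telescoping Pythagorean estimate $\|w-Tw\|^{2}\le r(\|w\|^{2}-\|Tw\|^{2})$ is correct and gives $\sum_{k\ge 0}\|T^{k}w-T^{k+1}w\|^{2}<\infty$, hence asymptotic regularity. The identification $\fix(T^{*})=\bigcap_i M_i$ via the chain of norm equalities is correct, and with it the density of $\image(I-T)$ follows from $\overline{\image(I-T)}=\ker(I-T^{*})^{\perp}$. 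Finally, the closedness of $L=\{z:\|T^{k}z\|\to 0\}$ uses only $\sup_k\|T^{k}\|\le 1$, and $\image(I-T)\subseteq L$ is exactly asymptotic regularity; density then gives $L=\mathcal H$. All steps are sound.

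One small cosmetic point: you could skip the explicit reduction to $M=\{0\}$ by noting directly that $\fix T=\fix(T^{*})=M$ (same norm-chain argument works for $T$), so that $\mathcal H=M\oplus\overline{\image(I-T)}$; your asymptotic-regularity estimate then already shows $T^{k}\to 0$ on $\overline{\image(I-T)}=M^{\perp}$, and $T^{k}=\id$ on $M$. This is the same proof, only organized slightly differently.
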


\begin{theorem}[Lapidus \cite{Lapidus1981} and Reich \cite{Reich1983}]\label{th:LapidusReich}
For each $x\in\mathcal H$,
\begin{equation}
\lim_{k\rightarrow\infty}\left\|\left(\frac 1 r \sum_{i=1}^r P_{M_i}\right)^k (x)-P_M(x)\right\|= 0.
\end{equation}
\end{theorem}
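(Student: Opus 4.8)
The plan is to deduce Theorem~\ref{th:LapidusReich} from Theorem~\ref{th:NeumannHalperin} via the product-space device of Pierra. Write $T := \frac{1}{r}\sum_{i=1}^{r} P_{M_i}$; being an average of orthogonal projections, $T$ is linear, self-adjoint and satisfies $0 \le T \le \id$. I would first observe that $\fix T = M$: if $Tx = x$, then
\[
\|x\|^{2} = \langle Tx, x\rangle = \tfrac{1}{r}\sum_{i=1}^{r}\|P_{M_i}x\|^{2} \le \|x\|^{2},
\]
which forces $\|P_{M_i}x\| = \|x\|$, hence $P_{M_i}x = x$, for every $i$; so $x \in M$, and the reverse inclusion is trivial.

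Next I would pass to the product Hilbert space $\mathbf{H} := \mathcal{H}^{r}$ equipped with $\langle \mathbf{x},\mathbf{y}\rangle := \sum_{i=1}^{r}\langle x_i, y_i\rangle$, and introduce the two closed linear subspaces $\mathbf{M} := M_1 \times \cdots \times M_r$ and the diagonal $D := \{(x,\ldots,x) : x \in \mathcal{H}\}$. A routine computation gives the block forms $P_{\mathbf{M}}(x_1,\ldots,x_r) = (P_{M_1}x_1,\ldots,P_{M_r}x_r)$ and $P_{D}(x_1,\ldots,x_r) = (\bar{x},\ldots,\bar{x})$ with $\bar{x} := \frac{1}{r}\sum_{i=1}^{r}x_i$ (the latter by minimizing $\sum_i \|x_i - y\|^{2}$ over $y$). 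Hence, for every $x \in \mathcal{H}$, $(P_{D}P_{\mathbf{M}})(x,\ldots,x) = (Tx,\ldots,Tx)$, and by induction $(P_{D}P_{\mathbf{M}})^{k}(x,\ldots,x) = (T^{k}x,\ldots,T^{k}x)$. Since also $\mathbf{M}\cap D = \{(x,\ldots,x) : x \in M\}$, so that $P_{\mathbf{M}\cap D}(x,\ldots,x) = (P_Mx,\ldots,P_Mx)$, applying Theorem~\ref{th:NeumannHalperin} in $\mathbf{H}$ to the subspaces $\mathbf{M}$ and $D$ yields $(T^{k}x,\ldots,T^{k}x)\to(P_Mx,\ldots,P_Mx)$ in $\mathbf{H}$; because $\|(a,\ldots,a)\|_{\mathbf{H}} = \sqrt{r}\,\|a\|$, this is exactly $\|T^{k}x - P_Mx\|\to 0$.

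There is also a self-contained route that bypasses Theorem~\ref{th:NeumannHalperin}: since $T$ is a self-adjoint contraction with spectrum contained in $[0,1]$, the spectral theorem gives $T = \int_{[0,1]}\lambda\,dE_\lambda$, whence
\[
\|T^{k}x - E(\{1\})x\|^{2} = \int_{[0,1]}\bigl|\lambda^{k} - \mathbf{1}_{\{1\}}(\lambda)\bigr|^{2}\,d\langle E_\lambda x, x\rangle \longrightarrow 0
\]
by bounded convergence, and $E(\{1\}) = P_{\ker(\id - T)} = P_M$ by the first paragraph. I expect the only genuine difficulty, in either approach, to be conceptual rather than computational: recognizing that the simultaneous iteration in $\mathcal{H}$ is precisely a two-set alternating iteration in the product space (respectively, that $T^{k}$ is governed by the scalar functions $\lambda^{k}$ on the spectrum), after which the verification is routine.
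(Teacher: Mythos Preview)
The paper does not supply its own proof of Theorem~\ref{th:LapidusReich}; it is quoted as a known result with references to Lapidus and Reich. So there is no ``paper's proof'' to compare against.

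That said, your proposal is correct. Your first route---reducing the simultaneous iteration to a two-subspace alternating iteration in the product space via Pierra's device---is precisely the formalization the paper itself invokes in Example~\ref{ex:Pierra1} (there with the normalized inner product $\frac{1}{r}\sum_i\langle x_i,y_i\rangle$, but this only rescales norms and leaves projections unchanged). The paper uses this reduction to transfer \emph{rate} information from \eqref{eq:AKW}; you use it, one level lower, to transfer bare convergence from Theorem~\ref{th:NeumannHalperin}. All the identifications you need ($P_{\mathbf M}$, $P_D$, $\mathbf M\cap D$, $(P_DP_{\mathbf M})^k$ on the diagonal) are exactly those recorded in \eqref{eq:Pierra}.

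Your second route via the spectral theorem is an independent, self-contained argument that does not rely on Theorem~\ref{th:NeumannHalperin} at all: since $T$ is self-adjoint with spectrum in $[0,1]$, dominated convergence on the spectral side gives $T^k\to E(\{1\})=P_{\fix T}=P_M$ strongly. This is arguably the cleanest proof, and it is the one implicit in the later dichotomy literature the paper cites (e.g.\ the spectral-theoretic framework of \cite{BauschkeDeutschHundal2009}). Either argument is complete; the only point worth making explicit in the spectral route is that $\lambda^k\to\mathbf 1_{\{1\}}(\lambda)$ pointwise on $[0,1]$ with a uniform bound, which you have.
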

Let $T,T^\infty\colon\mathcal H\rightarrow\mathcal H$ be such that $T^k(x)$ converges to $T^\infty (x)$ for every $x\in\mathcal H$. Following \cite{BauschkeDeutschHundal2009, DeutschHundal2010}, we say that $T^k$ \textit{converges arbitrarily slowly} to $T^\infty$ if for every sequence $\{a_k\}_{k=0}^\infty \subseteq (0,\infty)$ satisfying $a_k\rightarrow 0$ as $k\rightarrow\infty$, there is $x\in\mathcal H$ such that $\|T^k(x)-T^\infty (x)\|\geq a_k$ for every $k=0,1,2,\ldots$. We also recall that $T^k$ \textit{converges linearly} to $T^\infty$ if for some $c,f(x)>0$ and $q\in (0,1)$, we have $\|T^k(x)-T^\infty(x)\|\leq cq^k f(x)$ for all $k=0,1,2,\ldots$. In this paper $T$ and $T^\infty$ are related to projections onto linear or affine subspaces in which case we use $f(x)=\|x\|$ or $f(x)=\|x-T^\infty (0)\|$, respectively. Note that even in the case of closed linear subspaces the convergence in Theorems \ref{th:NeumannHalperin} and \ref{th:LapidusReich} does not have to be linear and moreover, it may indeed be arbitrarily slow. To see this, we now quote a relevant dichotomy result.

\begin{theorem}[Bauschke, Deutsch and Hundal \cite{BauschkeDeutschHundal2009, DeutschHundal2010}]\label{th:dichotomy}
Let $T:=P_{M_r}\ldots P_{M_1}$ or $T:=\frac 1 r \sum_{i=1}^r P_{M_i}$. Then exactly one of the following two statements holds:
\begin{enumerate}[(i)]
  \item $\sum_{i=1}^r M_i^\perp$ is closed. Then $T^k$ converges linearly to $P_M$.
  \item $\sum_{i=1}^r M_i^\perp$ is not closed. Then $T^k$ converges arbitrarily slowly to $P_M$.
\end{enumerate}
\end{theorem}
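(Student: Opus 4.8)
The plan is to reduce to the case $M=\{0\}$ and then to read the dichotomy off the single number $\rho_\infty:=\lim_{k}\|T^{k}\|$. Since $M\subseteq M_i$ for each $i$, we have $M_i^{\perp}\subseteq M^{\perp}$, so each $P_{M_i}$ — hence $T$ — leaves $M^{\perp}$ invariant, while $T$ acts as the identity on $M$; thus $T^{k}x-P_{M}x=T^{k}(P_{M^{\perp}}x)$, which converges to $0$ by Theorem~\ref{th:NeumannHalperin} or Theorem~\ref{th:LapidusReich}, and $\fix(T)=M$. Replacing $\mathcal H$ by $M^{\perp}$ and each $M_i$ by $M_i\cap M^{\perp}$ (these have intersection $\{0\}$, their orthogonal complements within $M^{\perp}$ are again the $M_i^{\perp}$, and $T$ restricts to the corresponding cyclic or simultaneous projection operator for them), I may assume $\bigcap_iM_i=\{0\}$, hence $\overline{\sum_iM_i^{\perp}}=\mathcal H$, $P_M=0$, and $\fix(T)=\{0\}$. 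As the alternatives ``$\sum_iM_i^{\perp}$ closed'' and ``$\sum_iM_i^{\perp}$ not closed'' are mutually exclusive and exhaustive, it suffices to prove the two implications (sum closed $\Rightarrow$ linear convergence) and (sum not closed $\Rightarrow$ arbitrarily slow convergence); these deliver the dichotomy together with the ``exactly one'' clause.

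The backbone is the chain of equivalences
\begin{equation}\label{eq:star}
\textstyle\sum_{i=1}^{r}M_i^{\perp}\ \text{is closed}\ \Longleftrightarrow\ \exists\,\delta>0\ \forall x\in\mathcal H:\ \sum_{i=1}^{r}\dist(x,M_i)^{2}\ge\delta\,\|x\|^{2}\ \Longleftrightarrow\ \rho_\infty<1 ,
\end{equation}
whose middle statement is the bounded linear regularity of $\{M_1,\dots,M_r\}$. The first equivalence is the principal functional-analytic step: I would apply the closed range theorem to the bounded linear map $\Psi\colon\bigoplus_{i=1}^{r}M_i^{\perp}\to\mathcal H$, $\Psi(y_1,\dots,y_r):=\sum_iy_i$, which has range $\sum_iM_i^{\perp}$ and adjoint $\Psi^{*}w=(P_{M_1^{\perp}}w,\dots,P_{M_r^{\perp}}w)$ with $\ker\Psi^{*}=\bigcap_iM_i=\{0\}$; then $\operatorname{range}\Psi$ is closed iff $\operatorname{range}\Psi^{*}$ is closed iff $\Psi^{*}$ is bounded below, i.e. iff $\sum_i\dist(x,M_i)^{2}=\|\Psi^{*}x\|^{2}\ge\delta\,\|x\|^{2}$ for all $x$.

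For the second equivalence, if bounded linear regularity holds then $\|T\|<1$: in the simultaneous case $\langle(\id-T)x,x\rangle=\frac1r\sum_i\dist(x,M_i)^{2}\ge\frac\delta r\|x\|^{2}$, and since $0\le T=T^{*}\le\id$ one has $\|Tx\|^{2}\le\langle Tx,x\rangle\le(1-\tfrac\delta r)\|x\|^{2}$; in the cyclic case the telescoping identity $\|x\|^{2}-\|Tx\|^{2}=\sum_{i=1}^{r}\dist(T_{i-1}x,M_i)^{2}$ (with $T_i:=P_{M_i}\cdots P_{M_1}$, $T_0:=\id$) together with $\dist(x,M_i)\le\sum_{j\le i}\dist(T_{j-1}x,M_j)$ (telescoping, as $T_ix\in M_i$) gives $\|x\|^{2}\le\delta^{-1}r^{2}\bigl(\|x\|^{2}-\|Tx\|^{2}\bigr)$, hence $\|Tx\|^{2}\le(1-\delta r^{-2})\|x\|^{2}$. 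Conversely, if bounded linear regularity fails, choose unit vectors $x_k$ with $\max_i\dist(x_k,M_i)\to0$; then $\|P_{M_i}x_k-x_k\|\to0$ for each $i$, so (composing, respectively averaging, and using nonexpansiveness) $\|Tx_k-x_k\|\to0$, whence $\|T^{n}x_k-x_k\|\le n\|Tx_k-x_k\|\to0$ for every fixed $n$; since $\|x_k\|=1$, this forces $\|T^{n}\|=1$ for all $n$, i.e. $\rho_\infty=1$.

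Granting \eqref{eq:star}, the closed case is immediate: $\rho_\infty<1$ gives $\|T^{n}\|<1$ for some $n$, and the submultiplicativity of $k\mapsto\|T^{k}\|$ yields $\|T^{k}x-P_Mx\|=\|T^{k}x\|\le Cq^{k}\|x\|$ with $q:=\|T^{n}\|^{1/n}<1$, i.e. linear convergence. The remaining implication — upgrading ``not linear'', which here means $\|T^{k}\|=1$ for all $k$, to ``arbitrarily slow'' — is where the real difficulty lies, and I would carry it out by a gliding-hump construction. Since $\|T^{k}\|=1$, for each $n$ one can pick a unit vector $w_n$ with $\|T^{j}w_n\|\ge1-\eta_n$ for all $j\le n$, where $\eta_n\downarrow0$ is chosen along the way; moreover the $w_n$ can be arranged to converge weakly to $0$ (after passing to a subsequence and, if needed, replacing $w_n$ by suitable normalized differences — a weak cluster point of norm $1$ would be a $w$ with $\|T^{j}w\|=1$ for all $j$, contradicting $T^{j}w\to0$), which renders the $w_n$, and their finitely many $T^{j}$-images, asymptotically orthogonal. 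Given an arbitrary null sequence $(a_k)$, I would then set $x:=\sum_{m\ge1}c_mw_{n_m}$ along a sufficiently rapidly increasing sequence $n_1<n_2<\cdots$, with coefficients $c_m>0$ comparable to $a_{n_{m-1}}$ and satisfying $\sum_mc_m^{2}<\infty$ and $\sum_{m'>m}c_{m'}<\tfrac14c_m$: for $k$ in the block $n_{m-1}<k\le n_m$ the summand $c_mw_{n_m}$ is still near its peak and, modulo interference from the other summands that the choice of the $n_j$ makes negligible, $\|T^{k}x\|\ge\tfrac12c_m\ge a_k$, so $\|T^{k}x-P_Mx\|\ge a_k$ for all $k$. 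I expect this last step — performing the recursion so that the weak-nullity, the decay times of the earlier summands, the smallness of the cross terms, and the matching of $(c_m)$ with $(a_k)$ hold simultaneously — to be the main obstacle; everything before it is either soft (the reduction, the ``exactly one'' clause) or a routine computation once the closed range theorem is in hand. (The simultaneous case may alternatively be reduced, following Pierra, to a two-set alternating projection in a product space, but the argument above treats both operators at once.)
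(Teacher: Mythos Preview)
The paper does not actually prove Theorem~\ref{th:dichotomy}: it is quoted as a known result of Bauschke, Deutsch and Hundal \cite{BauschkeDeutschHundal2009, DeutschHundal2010} and then used as a black box, notably in the proofs of Fact~\ref{th:equiv} and Theorem~\ref{th:dichLin}. There is therefore no in-paper argument to compare your proposal against.

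On its own merits your outline is sound and tracks the strategy of the original sources: the reduction to $M=\{0\}$, the equivalence of closedness of $\sum_iM_i^\perp$ with bounded linear regularity via the closed range theorem applied to $(y_1,\dots,y_r)\mapsto\sum_iy_i$, the derivation of $\|T\|<1$ from regularity in both the cyclic and simultaneous cases, and the gliding-hump construction for the arbitrarily-slow alternative are all standard and correct. One point that needs tightening is your mechanism for forcing the peaks $w_n$ to be weakly null. The idea is right --- a weak cluster point $w$ of the unit vectors $w_n$ cannot have $\|w\|=1$, since that would force strong convergence along a subsequence and hence $\|T^jw\|=1$ for every $j$, contradicting $T^jw\to0$ --- but after subtracting $w$ and renormalizing you only control $\|T^j(w_n-w)\|$ from below for $j$ large enough that $\|T^jw\|$ is already small, not for all $j\le n$. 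This is harmless for the construction because each hump only needs to be high on its own block $(n_{m-1},n_m]$ and the blocks can be pushed out, but the recursion has to be written to reflect it. If you prefer to sidestep the explicit construction, the arbitrarily-slow conclusion also follows from a general operator-theoretic lemma (this is how \cite{BauschkeDeutschHundal2009, DeutschHundal2010} package it): for any linear contraction $T$ with $T^k\to T^\infty$ strongly but $\|T^k-T^\infty\|\not\to0$, the convergence is arbitrarily slow.
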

Alternative (ii) of the above theorem has recently been extended in the case of the cyclic projection method. Following \cite{BadeaSeifert2016}, we say that $T^k$ \textit{converges super-polynomially fast} to $T^\infty$ on a nonempty set $X\subseteq\mathcal H$ if $\|T^k(x)-T^\infty(x)\|/k^{-\alpha} \rightarrow 0$ as $k\rightarrow\infty$ for all $\alpha > 0$ and $x\in X$.

\begin{theorem}[Badea and Seifert \cite{BadeaSeifert2016}]\label{th:dichotomyBS}
Let $T:=P_{M_r}\ldots P_{M_1}$. If $\sum_{i=1}^r M_i^\perp$ is not closed, then $T^k$ converges super-polynomially fast to $P_M$ on some dense linear subspace $X\subseteq\mathcal H$.
\end{theorem}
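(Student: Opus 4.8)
The plan is to reduce the assertion to the decay of the orbits of a single contraction and then to combine the spectral picture of a product of orthogonal projections with a quantitative Tauberian argument. Since $M\subseteq M_i$ for every $i$, each $P_{M_i}$ fixes $M$ pointwise, so $P_M T=T P_M=P_M$, and hence $T^k-P_M=(T-P_M)^k$ for all $k\ge 1$. Moreover $M^\perp$ is invariant under every $P_{M_i}$ (if $n\perp M$ and $m\in M$, then $\langle P_{M_i}n,m\rangle=\langle n,P_{M_i}m\rangle=\langle n,m\rangle=0$), hence under $T$; writing $T_0:=T|_{M^\perp}$, the error operator splits as $T-P_M=0\oplus T_0$ with respect to $\mathcal H=M\oplus M^\perp$. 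Thus $T_0$ is a contraction, $T_0^k\to 0$ strongly by Theorem~\ref{th:NeumannHalperin}, and it suffices to produce a dense linear subspace $X_0\subseteq M^\perp$ on which $\|T_0^k x\|$ tends to $0$ faster than every power of $1/k$; then $X:=M\oplus X_0$ is as required.

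The second step is the spectral localization $\sigma(T_0)\subseteq\overline{\mathbb D}$ with $\sigma(T_0)\cap\mathbb T\subseteq\{1\}$. Since $\sigma(T_0)\subseteq\overline{\mathbb D}$ and the topological boundary of the spectrum lies in the approximate point spectrum, it is enough to exclude, for $\lambda\in\mathbb T$ with $\lambda\ne 1$, a sequence of unit vectors $x_n$ with $T_0 x_n-\lambda x_n\to 0$. Such $x_n$ satisfy $\|T_0 x_n\|\to 1$; since $\|P_{M_r}\cdots P_{M_1}x_n\|\le\|P_{M_{r-1}}\cdots P_{M_1}x_n\|\le\cdots\le\|x_n\|=1$, all these norms converge to $1$, and applying the Pythagorean identity $\|y\|^2=\|P_{M_i}y\|^2+\|y-P_{M_i}y\|^2$ successively along the product forces $P_{M_i}\cdots P_{M_1}x_n-P_{M_{i-1}}\cdots P_{M_1}x_n\to 0$ for each $i$; telescoping gives $T_0 x_n-x_n\to 0$, whence $(1-\lambda)x_n\to 0$, a contradiction. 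Note that when $\sum_i M_i^\perp$ fails to be closed, $\operatorname{ran}(I-T)$ fails to be closed as well, so $I-T_0$ — which is injective on $M^\perp$ — is not bounded below, i.e.\ $1\in\sigma(T_0)$; this is precisely the obstruction to a uniform rate, and is why the per-vector statement is not vacuous.

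The substance of the proof is the construction of $X_0$ together with the estimate of the rate, carried out through the Riesz--Dunford functional calculus for $T_0$. For a function $g$ holomorphic on a neighbourhood of $\sigma(T_0)$ that vanishes to high order at the single boundary point $1$ — say $g(z)=(1-z)^m$, so $g(T_0)=(I-T_0)^m$ — a vector of the form $g(T_0)y$ should decay quickly, because $T_0^k g(T_0)$ can be written as a contour integral of $\lambda^k g(\lambda)(\lambda-T_0)^{-1}$ whose contour one deforms inside the open unit disc away from $1$, where it must bulge slightly outside $\mathbb D$, the high-order vanishing of $g$ at $1$ compensating the loss of decay on that bulge. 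Each range $\operatorname{ran}((I-T_0)^m)$ is dense in $M^\perp$, because $I-T_0$ is injective with dense range (its adjoint is again a product of orthogonal projections, with trivial kernel on $M^\perp$), so the image of a dense set under $I-T_0$ stays dense. The resolution of the rate estimate is a quantitative Katznelson--Tzafriri / Ingham--Karamata Tauberian theorem (in the circle of ideas of Seifert and of Batty--Duyckaerts): one feeds the bare strong stability $T_0^k\to 0$ into the Tauberian machinery and, after sacrificing enough regularity, converts it into a genuinely super-polynomial rate; choosing the regularizing functions in a suitably quasi-analytic class then allows one to assemble the corresponding ranges into a single dense subspace $X_0$ each of whose elements decays faster than every polynomial.

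The main obstacle is exactly this last step. Because $\sum_i M_i^\perp$ is not closed, there is no a priori polynomial bound on $\|(\lambda-T_0)^{-1}\|$ as $\lambda\to 1$, so the contour integral cannot be controlled by a naive Cauchy estimate; and, worse, the obvious candidate $\bigcap_m\operatorname{ran}((I-T_0)^m)$ need not be dense, since the ranges $\operatorname{ran}((I-T_0)^m)$ decrease with $m$ and a finite linear combination of vectors each having a finite polynomial decay order has again only a finite decay order. Hence the dense subspace must be built so that every individual element has \emph{infinite} polynomial order, which is what forces the delicate Tauberian/quasi-analytic argument. For $r=2$ this difficulty evaporates: there the powers of $T_0$ are governed, through the two-subspace (Halmos) decomposition, by a single positive contraction (the cosine of the Friedrichs angle), the spectral theorem applies, and the spectral subspaces on which that cosine is bounded away from $1$ already provide a dense subspace on which the decay is pointwise exponential; it is the loss of self-adjointness for $r\ge 3$ that reduces the conclusion to merely super-polynomial speed.
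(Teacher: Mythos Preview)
Your approach and the paper's diverge completely. The paper does not attempt to reprove the Badea--Seifert theorem; it simply observes that \cite[Theorem~4.3]{BadeaSeifert2016} already gives the assertion over \emph{complex} Hilbert spaces, and then transfers the conclusion to the real case by a complexification argument: one passes to $\mathcal H_{\mathbb C}=\mathcal H\times\mathcal H$, checks that the complexified projections $\tilde P_j(x+iy):=P_jx+iP_jy$ are again orthogonal projections onto $\tilde M_j=M_j\times M_j$, verifies that the Friedrichs number remains equal to $1$ so the hypothesis survives, applies the complex theorem to obtain a dense subspace $\tilde X\subseteq\mathcal H_{\mathbb C}$, and reads off a dense $X\subseteq\mathcal H$ from the real parts. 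No spectral theory, no resolvent estimates, no Tauberian arguments enter the paper's proof at all.

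What you sketch is, in effect, the internal mechanism behind the Badea--Seifert result itself: localize the peripheral spectrum of $T_0=T|_{M^\perp}$ at $\{1\}$, then use the holomorphic functional calculus together with a quantitative Katznelson--Tzafriri type argument to build a dense subspace with super-polynomial decay. Your first two steps (the reduction to $T_0$ and the exclusion of unimodular approximate eigenvalues $\lambda\neq 1$) are sound. Two genuine gaps remain, however. First, the Riesz--Dunford contour integrals, the deformation of the contour inside $\mathbb D$, and even the statement $\sigma(T_0)\cap\mathbb T\subseteq\{1\}$ presuppose complex scalars; on a real Hilbert space these tools are not directly available, and the device that makes them available is precisely complexification --- which is the paper's entire contribution here. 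Second, and more importantly, you correctly identify the ``main obstacle'' but do not resolve it: you explain why neither a naive Cauchy estimate nor the candidate $\bigcap_m\operatorname{ran}((I-T_0)^m)$ suffices, yet you never actually construct the dense subspace or establish the rate. Pointing to a ``delicate Tauberian/quasi-analytic argument in the circle of ideas of Seifert and of Batty--Duyckaerts'' is a reference to the literature, not a proof; carrying that step out is exactly the substance of \cite{BadeaSeifert2016}. As written, your proposal is a heuristic outline of the cited theorem rather than an independent proof of the present statement, and it would in any case still require the complexification that the paper supplies.
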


We remark here that Theorem \ref{th:dichotomyBS} follows from \cite[Theorem 4.3]{BadeaSeifert2016} which was only proved for a complex Hilbert space. In order to see this, one can apply a complexification argument which has kindly been provided to us by Catalin Badea and David Seifert; see the Appendix for more details.\ For more dichotomy and trichotomy results concerning arbitrarily slow convergence, we refer the interested reader to \cite{BadeaGrivauxMuller2011, DeutschHundal2011, DeutschHundal2015}. Note that using the above theorems, one can easily see that arbitrarily slow as well as super-polynomially fast convergence may only happen in the infinite dimensional case.

A very natural question related to Theorem \ref{th:dichotomy} (i) is the following one: What is the optimal error bound (smallest possible estimate, independent of $x$) such that $\|T^k(x)-P_M(x)\|\leq cq^k\|x\|$? This question can be answered by finding the norm of the \textit{error operator} $T^k-P_M$. In the case of the alternating projection method ($r=2$), we have, by Aronszajn \cite{Aronszajn1950} (inequality), and Kayalar and Weinert \cite{KayalarWeinert1988} (equality),
\begin{equation}\label{eq:AKW}
\|(P_{M_2}P_{M_1})^k-P_M\|=\cos(M_1,M_2)^{2k-1},
\end{equation}
where by
\begin{equation}\label{eq:def:cos2}
\cos(M_1,M_2):=\sup\{|\langle x_1,x_2\rangle| \mid x_i\in M_i\cap (M_1\cap M_2)^\perp, \|x_i\|\leq 1\}\in[0,1]
\end{equation}
we denote the cosine of the Friedrichs angle between the subspaces $M_1$ and $M_2$. In addition, one can show that $\cos(M_1,M_2)<1$ if and only if $M_1^\perp+M_2^\perp$ is closed; see, for example,  \cite[Theorem 9.35]{Deutsch2001} and \cite[p. 235]{Deutsch2001} for a complete proof and detailed historical notes going back to \cite{BauschkeBorwein1993, Deutsch1986} and Simoni\v c. As far as we are aware, for $r>2$ the exact computation of the error operator norm for both algorithmic operators is still unknown; see, for example, \cite{BadeaGrivauxMuller2011, BadeaSeifert2016, DeutschHundal2011, DeutschHundal2015, PustylnikReichZaslavski2012, PustylnikReichZaslavski2013}. Even for $r=2$, the norm $\|((P_{M_1}+P_{M_2})/2)^k-P_M\|$ seems to be unknown. Note that one could try to find the optimal estimate for the simultaneous projection method by using \eqref{eq:AKW} and the corresponding alternating projection formalization of Pierra \cite{Pierra1984} in the product space $\mathcal H^r$. This approach, although very natural, is somewhat misleading and, when applied directly, provides a weaker result than the optimal one; compare Example \ref{ex:Pierra1}, Theorem \ref{th:norm} and Example \ref{ex:Pierra2} below.

The main contribution of this paper is to extend Theorem \ref{th:dichotomy} (i) in the case of the simultaneous projection method by finding the optimal error bound, that is, by computing the exact value of $\|(\frac 1 r \sum_{i=1}^r P_{M_i})^k-P_M\|$ for any $r\geq 2$; see Theorems \ref{th:norm} and \ref{th:dichLin} below. For $r=2$ we show that this norm is greater than the norm in \eqref{eq:AKW}, which somewhat explains why, in general, the alternating projection method is indeed faster than its simultaneous variant whenever we have linear convergence; see Remark \ref{rem:alternatingPM}.

In addition, we formally extend Theorem \ref{th:dichotomyBS} for the simultaneous projection method with $T=\frac 1 r \sum_{i=1}^r P_{M_i}$ by using the alternating projection formalization in a product space.

Finally, by using a translation argument, we obtain analogous results in the case of affine subspaces; see Corollary \ref{th:dichAff}.

\section{Main result}
We begin this section with a simple example showing that a direct application of Pierra's alternating projection formalization in a product space indeed leads to linear convergence, but the obtained estimate, as we show in Theorem \ref{th:norm} below, is not optimal.

\begin{example}[Alternating projection formalization of Pierra ]\label{ex:Pierra1}
Let $M_1,\ldots,M_r\subseteq\mathcal H$ be closed and linear subspaces and $M:=\bigcap_{i=1}^r M_i$. Moreover, following Pierra \cite{Pierra1984}, we consider the subsets
\begin{equation}\label{eq:CD}
\mathbf C:=M_1\times\ldots\times M_r \quad \text{and} \quad \mathbf D:=\{\mathbf x=(x,\ldots,x)\mid x\in \mathcal H\}
\end{equation}
of the product space $\mathcal H^r$ equipped with the scalar product $\langle \mathbf x,\mathbf y\rangle:=\frac 1 r \sum_{i=1}^r\langle x_i, y_i\rangle$, where $\mathbf x=(x_1,\ldots,x_r)$, $\mathbf y=(y_1,\ldots,y_r)$, and $x_i,y_i\in\mathcal H$, $i=1,\ldots,r$. We recall \cite[Fact 3.2, Lemma 3.3]{DeutschHundal2008} that for any $\mathbf x=(x,\ldots,x)\in\mathbf D$, we have
\begin{equation}\label{eq:Pierra}
  \|\mathbf x\|=\|x\|,\quad
  (P_{\mathbf D}P_{\mathbf C})^k (\mathbf x)=\left(T^k (x),\ldots, T^k (x)\right) \quad \text{and}\quad
  P_{\mathbf C\cap\mathbf D} (\mathbf x)= (P_M(x),\ldots,P_M(x)),
\end{equation}
where $T:=\frac 1 r \sum_{i=1}^rP_{M_i}$. Moreover, by \eqref{eq:AKW}, $\|(P_{\mathbf D}P_{\mathbf C})^k-P_{\mathbf C \cap \mathbf D}\|=\cos(\mathbf C, \mathbf D)^{2k-1}.$ This leads to the following estimate:
\begin{align}
\nonumber
  \|T^k(x) - P_M(x)\|
  &= \|(T^k(x)-P_M(x),\ldots, T^k(x)-P_M(x))\|\\
  \nonumber
  &= \|(P_{\mathbf D}P_{\mathbf C})^k (\mathbf x) - P_{\mathbf C\cap\mathbf D} (\mathbf x)\|\\
  \nonumber
  &\leq \cos(\mathbf C, \mathbf D)^{2k-1} \|\mathbf x\|\\
  &= \cos(\mathbf C, \mathbf D)^{2k-1} \|x\|.
\end{align}
Consequently, if $(P_{\mathbf D}P_{\mathbf C})^k$ converges linearly to $P_{\mathbf C\cap\mathbf D}$, that is, if $\cos(\mathbf C, \mathbf D)<1$, then $T^k$ converges linearly to $P_M$. On the other hand, $\mathbf y=P_{\mathbf D}P_{\mathbf C}\mathbf x\in\mathbf D$ for every $\mathbf x \in \mathcal H^r$ and we have
\begin{equation}
\|(P_{\mathbf D}P_{\mathbf C})^k (\mathbf x) - P_{\mathbf C\cap\mathbf D} (\mathbf x)\|
=\|(P_{\mathbf D}P_{\mathbf C})^{k-1} (\mathbf y) - P_{\mathbf C\cap\mathbf D}(\mathbf y)\|
=\|T^{k-1}(y) - P_M(y)\|.
\end{equation}
Thus $(P_{\mathbf D}P_{\mathbf C})^k$ converges linearly to $P_{\mathbf C\cap\mathbf D}$ whenever $T^k$ converges linearly to $P_M$.

\end{example}

We now prove the following general lemma. A closely related result can be found in \cite[Theorem 2.18]{BauschkeCruzETAL2016}.

\begin{lemma}\label{th:basic}
Let $\mathcal H$ be a real Hilbert space, $T\in B(\mathcal H)$ and let $M\subseteq F=\fix T$ be a nonempty, closed and linear subspace. Assume that $P_M=P_MT$ which holds, for example, if $P_F=P_FT$, or $T$ is self-adjoint, or $\|T\|\leq 1$. Then
\begin{equation}\label{th:basic:1}
T^k - P_M = (T-P_M)^k
\end{equation}
and therefore
\begin{equation}\label{th:basic:2}
\|T^k - P_M\|\leq \|T-P_M\|^k.
\end{equation}
If, in addition, $T$ is normal, that is, $T^*T=TT^*$, then $T-P_M$ is normal too and consequently,
\begin{equation}\label{th:basic:3}
\|T^k - P_M\| = \|T-P_M\|^k.
\end{equation}
\end{lemma}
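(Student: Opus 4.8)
The plan is to reduce everything to four identities: $TP_M = P_M$, $P_M T = P_M$, $P_M^2 = P_M$, and $P_M^* = P_M$. The last two are automatic for an orthogonal projection, and $TP_M = P_M$ holds because $M\subseteq F=\fix T$, so that $P_M x\in F$ and hence $TP_M x = P_M x$ for every $x$. The identity $P_M T = P_M$ is the standing hypothesis, but I would first verify the three sufficient conditions listed. If $P_F = P_F T$: since $M\subseteq F$, the standard fact $P_M P_F = P_M$ gives $P_M T = P_M P_F T = P_M P_F = P_M$. If $T=T^*$: for $m\in M$ and any $x$ one has $\langle Tx - x, m\rangle = \langle x, Tm - m\rangle = 0$ because $Tm = m$, so $Tx - x\in M^\perp$ and $P_M T = P_M$. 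If $\|T\|\le 1$: then $Tm = m$ forces $T^*m = m$, since $\|T^*m - m\|^2 = \|T^*m\|^2 - 2\langle m, Tm\rangle + \|m\|^2 \le 2\|m\|^2 - 2\|m\|^2 = 0$; now repeat the previous computation with $T^*$ in place of $T$.

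Granting these, I would prove \eqref{th:basic:1} by induction on $k$, setting $S := T - P_M$. Iterating $TP_M = P_M$ and $P_M T = P_M$ gives $T^j P_M = P_M$ and $P_M T^j = P_M$ for all $j\ge 0$. The case $k=1$ is trivial; assuming $S^{k-1} = T^{k-1} - P_M$, expand $S^k = (T^{k-1} - P_M)(T - P_M) = T^k - T^{k-1}P_M - P_M T + P_M^2 = T^k - P_M - P_M + P_M = T^k - P_M$. Then \eqref{th:basic:2} is immediate from submultiplicativity of the operator norm.

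For the last assertion I would show that $S = T - P_M$ is normal when $T$ is. Taking adjoints, $S^* = T^* - P_M$; moreover, taking adjoints in $TP_M = P_M$ gives $P_M T^* = P_M$, while taking adjoints in $P_M T = P_M$ gives $T^* P_M = P_M$. Using these together with $P_M^2 = P_M$, one computes $SS^* = (T - P_M)(T^* - P_M) = TT^* - P_M$ and $S^*S = (T^* - P_M)(T - P_M) = T^*T - P_M$, so $T^*T = TT^*$ yields $S^*S = SS^*$. It then remains to invoke the standard fact that $\|N^k\| = \|N\|^k$ for any normal $N\in B(\mathcal H)$ (equality of operator norm and spectral radius for normal operators, or the doubling identity $\|N^2\| = \|N\|^2$ bootstrapped to all $k$); applying it to $N = S$ and combining with \eqref{th:basic:1} gives \eqref{th:basic:3}.

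There is no serious obstacle here; the argument is elementary operator algebra once the bookkeeping is arranged. The two points to watch are (a) not conflating the adjoint identities $P_M T^* = P_M$ and $T^* P_M = P_M$, each of which comes from a different one of the two basic relations, and (b) the clean application of $\|N^k\| = \|N\|^k$ for normal $N$, which is the only place where the normality of $T$ is genuinely used — without it, \eqref{th:basic:2} cannot in general be improved to an equality.
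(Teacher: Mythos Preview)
Your proof is correct. The core steps---the identity $T^k - P_M = (T-P_M)^k$ and the normality of $T-P_M$---match the paper's (the paper uses the binomial theorem where you use induction, a cosmetic difference). The genuine divergence is in the verification of the sufficient conditions for $P_MT=P_M$. The paper handles the self-adjoint and nonexpansive cases by first proving the stronger identity $P_F = P_F T$: for self-adjoint $T$ via the variational characterization of $P_F$, and for $\|T\|\le 1$ via the mean ergodic theorem applied to the Ces\`aro averages $(I+T+\cdots+T^{k-1})/k$; it then reduces to the first sufficient condition through $P_M = P_M P_F$. Your route is more direct and more elementary: in the self-adjoint case you show $Tx-x\in M^\perp$ in one line, and in the nonexpansive case you obtain $T^*m=m$ for $m\in M$ by the norm computation, bypassing the ergodic theorem entirely. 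Both approaches are sound; yours is shorter and self-contained, while the paper's has the incidental benefit of establishing the stronger statement $P_F=P_FT$.
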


\begin{proof}
Note that by assumption, $P_M=TP_M=P_MT$ and we can apply the binomial theorem to obtain
\begin{equation}\label{proof:th:norm:1}
(T-P_M)^k = \sum_{l=0}^k \binom k l (-1)^l T^{k-l} P_M^{l}
= T^k + \sum_{l=1}^k \binom k l (-1)^l P_M= T^k-P_M.
\end{equation}
Thus \eqref{th:basic:2} follows. The operator $P_M$ is self-adjoint and hence $T-P_M$ is normal. We recall that for any normal $N\in B(\mathcal H)$, $\|N^k\|=\|N\|^k$; see, for example, \cite[Lemma 8.32]{Deutsch2001}. Thus equality \eqref{th:basic:3} follows from \eqref{th:basic:1}.

We now show that $P_M=P_MT$ follows from $P_F=P_FT$. Observe that $F$ is a closed linear subspace. Indeed, due to the continuity of $F$, for every $F\ni x^k \rightarrow x$, we get $x=\lim x^k=\lim T(x^k)=T(x)$. Consequently, since $M\subseteq F$ are both closed linear subspaces, we have $P_M=P_MP_F$; see \cite[Lemma 9.2]{Deutsch2001}. This implies that $P_MT=P_MP_FT=P_MP_F=P_M$.

In the next step we show that $P_F=P_FT$ holds for any self-adjoint $T$. To this end, we recall that by the characterization of the orthogonal projection \cite[Theorem 4.9]{Deutsch2001}, $y=P_F(x)$ if an only if $y\in F$ and $\langle x-y,z\rangle=0$ for every $z\in F$. Now note that $P_FT(x)\in F$ and moreover,
\begin{equation}
\langle x-P_FT(x),z\rangle=\langle x-P_FT(x),T(z)\rangle=\langle T(I-P_F)T(x),z\rangle=\langle (I-P_F)T(x),z\rangle=0,
\end{equation}
which completes this part of the proof.

Finally, we show that when $\|T\|\leq 1$, then the identity $P_F=P_FT$ also holds. In this case $\|(I+T+\ldots+T^{k-1})/k\|\leq 1$ and $\|T^k(x)/k\|\leq \frac 1 k \rightarrow 0$. Consequently, by the mean ergodic theorem \cite[Corollary VIII.5.4]{DunfordSchwartz1988}, we have
\begin{equation}
P_F(x)=\lim_k \frac {x+T(x)+\ldots+T^{k-1}(x)} k= \lim_k \frac {T(x)+T(T(x))\ldots+T^{k-1}(T(x))} k =P_FT(x),
\end{equation}
which completes the proof.
\end{proof}

Before formulating our next result, following \cite[Definition 3.2]{BadeaGrivauxMuller2011}, we recall the following generalization of the cosine of the Friedrichs angle for more than two subspaces.

\begin{definition}
Let $M_1,\ldots,M_r\subseteq\mathcal H$ be closed linear subspaces and let $M:=\bigcap_i M_i$. The \textit{Friedrichs number} is defined by
\begin{equation}\label{eq:def:Friedrich}
  \cos(M_1,\ldots,M_r):=\sup\left\{\frac{1}{r-1}\frac{\sum_{i\neq j}\langle x_i,x_j\rangle }{\sum_{i=1}^r\|x_i\|^2}\mid x_i\in M_i\cap M^{\perp}\text{ and } \sum_{i=1}^r\|x_i\|^2\neq 0\right\}.
  \end{equation}
\end{definition}
The above definition coincides in the case of $r=2$ with \eqref{eq:def:cos2}; see \cite[Lemma 3.1]{BadeaGrivauxMuller2011}. Moreover, $\cos(M_1,\ldots,M_r)\in[0,1]$. Thus we can indeed extend the notion of the Friedrichs angle $\theta\in[0,\pi/2]$ with the implicit definition $\cos(\theta)=\cos(M_1,\ldots,M_r)$.

\begin{theorem}[Exact norm value]\label{th:norm}
Let $M_1,\ldots,M_r\subseteq\mathcal H$ be closed linear subspaces and let $M:=\bigcap_i M_i$. Moreover, let $\mathbf C, \mathbf D \subseteq \mathcal H^r$ be defined as in \eqref{eq:CD}. Then, for every $k=1,2,\ldots$, we have

\begin{align}\label{eq:th:norm}
\nonumber
\left\|\left(\frac 1 r \sum_{i=1}^r P_{M_i}\right)^k - P_M\right\|
&= \left\|\frac 1 r \sum_{i=1}^r P_{M_i} - P_M\right\|^k\\
\nonumber
&= \left(\frac{r-1}{r}\cos(M_1,\ldots,M_r)+\frac 1 r\right)^k\\
\nonumber
&= \cos(\mathbf C, \mathbf D)^{2k}\\
\nonumber
&= \|P_{\mathbf D}P_{\mathbf C}P_{\mathbf D}-P_{\mathbf C\cap\mathbf D}\|^k\\
&= \|(P_{\mathbf D}P_{\mathbf C}P_{\mathbf D})^k-P_{\mathbf C\cap\mathbf D}\|.
\end{align}
\end{theorem}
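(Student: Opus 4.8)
The plan is to establish the chain of five equalities by combining Lemma~\ref{th:basic} applied to the operator $T = \frac{1}{r}\sum_{i=1}^r P_{M_i}$ with a direct computation of the single-step norm $\|T - P_M\|$, and then translating everything into the product-space language via Pierra's formalism from Example~\ref{ex:Pierra1}. The first equality, $\|(T^k - P_M)\| = \|T - P_M\|^k$, is immediate from \eqref{th:basic:3}: indeed $T$ is self-adjoint (each $P_{M_i}$ is) hence normal, so Lemma~\ref{th:basic} applies once we observe $P_M = P_M T$, which itself follows from self-adjointness of $T$ as stated in the lemma's hypotheses (alternatively from $\|T\|\le 1$). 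So the real content is the value of $\|T - P_M\|$.

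First I would compute $\|T - P_M\|$. Since $M \subseteq \mathrm{Fix}\,T$, one checks that $T - P_M$ vanishes on $M$ and maps $M^\perp$ into $M^\perp$, so $\|T - P_M\| = \|(T-P_M)|_{M^\perp}\|$, and on $M^\perp$ we have $P_M = 0$, so we must compute $\sup\{\langle (T - P_M) x, x\rangle : x \in M^\perp, \|x\| \le 1\}$ using self-adjointness — actually I need the operator norm, which for a self-adjoint operator equals $\sup |\langle (T-P_M)x,x\rangle|$ over the unit ball. Writing $\langle T x, x\rangle = \frac{1}{r}\sum_i \|P_{M_i} x\|^2$ and carefully relating this to the quadratic form appearing in the definition \eqref{eq:def:Friedrich} of the Friedrichs number is the crux. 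The key algebraic identity is that for $x \in M^\perp$, setting $x_i = P_{M_i} x \in M_i \cap M^\perp$... here I must be careful: $P_{M_i}x$ need not lie in $M^\perp$ in general, but its component in $M$ is controlled. A cleaner route: decompose and use that the relevant supremum of $\frac{1}{r}\sum_i \|P_{M_i}x\|^2 / \|x\|^2$ over $x \in M^\perp$ equals $\frac{r-1}{r}\cos(M_1,\dots,M_r) + \frac{1}{r}$ by a polarization/substitution argument matching the numerator $\sum_{i\ne j}\langle x_i, x_j\rangle$ in \eqref{eq:def:Friedrich}. I would also need the lower bound on the norm (that the supremum is attained in the limit), which comes from choosing near-optimal $x_i$ in the definition of $\cos(M_1,\dots,M_r)$ and reconstructing a test vector $x$. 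This establishes equality two.

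For the product-space equalities: by \eqref{eq:Pierra} we have $\|T^k(x) - P_M(x)\| = \|(P_{\mathbf D}P_{\mathbf C})^k(\mathbf x) - P_{\mathbf C\cap\mathbf D}(\mathbf x)\|$ for $\mathbf x \in \mathbf D$, and since $P_{\mathbf D}$ is self-adjoint with $P_{\mathbf D}P_{\mathbf C}P_{\mathbf D}$ fixing $\mathbf C \cap \mathbf D$, the operator $N := P_{\mathbf D}P_{\mathbf C}P_{\mathbf D}$ is self-adjoint, so Lemma~\ref{th:basic} gives $\|N^k - P_{\mathbf C\cap\mathbf D}\| = \|N - P_{\mathbf C\cap\mathbf D}\|^k$, yielding the last two equalities once I identify $\|N - P_{\mathbf C\cap\mathbf D}\|$ with $\cos(\mathbf C,\mathbf D)^2$. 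The latter is a standard fact: $\|P_{\mathbf D}P_{\mathbf C}P_{\mathbf D} - P_{\mathbf C\cap\mathbf D}\| = \cos(\mathbf C,\mathbf D)^2$ since $P_{\mathbf D}(P_{\mathbf C} - P_{\mathbf C\cap\mathbf D})P_{\mathbf D}$ restricted to the relevant subspace has norm equal to the square of the angle cosine (this is the self-adjoint "three-projection" companion to the Kayalar–Weinert formula \eqref{eq:AKW}). Finally, to close the loop I must check $\cos(\mathbf C,\mathbf D)^2 = \frac{r-1}{r}\cos(M_1,\dots,M_r) + \frac{1}{r}$; this follows by writing out definition \eqref{eq:def:cos2} for the pair $(\mathbf C, \mathbf D)$ in $\mathcal H^r$ — a vector in $\mathbf D \cap (\mathbf C \cap \mathbf D)^\perp$ has the form $(x,\dots,x)$ with $x \in M^\perp$, its projection onto $\mathbf C$ is $(P_{M_1}x, \dots, P_{M_r}x)$, and the inner product $\langle (x,\dots,x), (P_{M_1}x,\dots,P_{M_r}x)\rangle = \frac{1}{r}\sum_i \|P_{M_i}x\|^2$, which is exactly the quadratic form from the previous paragraph.

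The main obstacle I anticipate is the careful handling of the orthogonality bookkeeping in computing $\|T - P_M\|$ on $M^\perp$: ensuring that the supremum $\sup_{x\in M^\perp, \|x\|\le 1}\frac{1}{r}\sum_i\|P_{M_i}x\|^2$ genuinely equals $\frac{r-1}{r}\cos(M_1,\dots,M_r)+\frac{1}{r}$ requires showing both that any test vectors $x_i \in M_i \cap M^\perp$ from \eqref{eq:def:Friedrich} can be assembled (up to $\varepsilon$) into a single $x \in M^\perp$ realizing the claimed value, and conversely that no $x$ can exceed it — the forward direction essentially needs the $x_i$ to arise as the projections $P_{M_i}x$ of a common $x$, which is precisely where Pierra's product-space reformulation does the work for free, so I would likely route this computation through $\cos(\mathbf C,\mathbf D)$ rather than attempt it directly in $\mathcal H$.
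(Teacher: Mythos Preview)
Your overall architecture matches the paper's proof exactly: apply Lemma~\ref{th:basic} to both $T=\frac1r\sum_iP_{M_i}$ and $\mathbf T=P_{\mathbf D}P_{\mathbf C}P_{\mathbf D}$ to get the first and last equalities, identify $\|P_{\mathbf D}P_{\mathbf C}P_{\mathbf D}-P_{\mathbf C\cap\mathbf D}\|=\cos(\mathbf C,\mathbf D)^2$ (the paper does this via the factorisation $(P_{\mathbf D}P_{\mathbf C}-P_{\mathbf C\cap\mathbf D})(P_{\mathbf D}P_{\mathbf C}-P_{\mathbf C\cap\mathbf D})^*$ and $\|AA^*\|=\|A\|^2$ together with \eqref{eq:AKW}), show $\|P_{\mathbf D}P_{\mathbf C}P_{\mathbf D}-P_{\mathbf C\cap\mathbf D}\|=\|T-P_M\|$ via Pierra's identities, and finally compute $\cos(\mathbf C,\mathbf D)^2$ in terms of the Friedrichs number.

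The one place your plan would stall is the last step. You propose to evaluate $\cos(\mathbf C,\mathbf D)^2$ by fixing $\mathbf y=(x,\ldots,x)\in\mathbf D\cap(\mathbf C\cap\mathbf D)^\perp$ and taking its projection onto $\mathbf C$; this yields the quadratic form $\sup_{x\in M^\perp}\frac1r\sum_i\|P_{M_i}x\|^2/\|x\|^2$, which is exactly $\|T-P_M\|$ again---so the computation closes the loop between $\|T-P_M\|$ and $\cos(\mathbf C,\mathbf D)^2$ but does \emph{not} produce the Friedrichs number. You correctly flag this as the obstacle. The paper's remedy is to optimise in the opposite order: fix a general $\mathbf x=(x_1,\ldots,x_r)\in\mathbf C\cap(\mathbf C\cap\mathbf D)^\perp$ (so $x_i\in M_i\cap M^\perp$) and first take the supremum over $\mathbf y=(y,\ldots,y)$; since $\sum_ix_i\in M^\perp$, the constraint $y\in M^\perp$ can be dropped and Cauchy--Schwarz gives the inner supremum as $\|\sum_ix_i\|^2/(r\sum_i\|x_i\|^2)$. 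The algebraic identity $\|\sum_ix_i\|^2=\sum_i\|x_i\|^2+\sum_{i\neq j}\langle x_i,x_j\rangle$ then matches the definition \eqref{eq:def:Friedrich} on the nose, giving $\frac{r-1}{r}\cos(M_1,\ldots,M_r)+\frac1r$. This sidesteps entirely the ``reconstruct $x$ from near-optimal $x_i$'' issue you anticipated.
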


\begin{proof}
Note that $T:=\frac 1 r \sum_{i=1}^rP_{M_i}$ is self-adjoint, $\|T\|\leq 1$ and $\fix T=M$. Thus the first equality follows from Lemma \ref{th:basic}. The last equality again follows from Lemma \ref{th:basic}, but this time\ applied to $\mathbf T:=P_{\mathbf D}P_{\mathbf C}P_{\mathbf D}$. Furthermore, we see that
\begin{align}\label{pr:norm:1}
\nonumber
  \|P_{\mathbf D}P_{\mathbf C}P_{\mathbf D}-P_{\mathbf C\cap\mathbf D}\|
  &=\|P_{\mathbf D}P_{\mathbf C}P_{\mathbf C}P_{\mathbf D}-P_{\mathbf C\cap\mathbf D}\|\\
  \nonumber
  &=\|(P_{\mathbf D}P_{\mathbf C}-P_{\mathbf C\cap\mathbf D})
  (P_{\mathbf C}P_{\mathbf D}-P_{\mathbf C\cap\mathbf D})\|\\
  \nonumber
  &=\|(P_{\mathbf D}P_{\mathbf C}-P_{\mathbf C\cap\mathbf D})
  (P_{\mathbf D}P_{\mathbf C}-P_{\mathbf C\cap\mathbf D})^*\|\\
  \nonumber
  &=\|P_{\mathbf D}P_{\mathbf C}-P_{\mathbf C\cap\mathbf D}\|^2\\
  &= \cos(\mathbf C, \mathbf D)^2,
\end{align}
where the second equality follows from $P_{\mathbf C\cap\mathbf D}=P_{\mathbf C\cap\mathbf D}P_{\mathbf C}=P_{\mathbf C\cap\mathbf D}P_{\mathbf D}$ and the latter one follows from \eqref{eq:AKW}.
On the other hand, $P_{\mathbf D}(\mathbf B)= \mathbf D \cap \mathbf B$, where $\mathbf B$ is the unit ball in $\mathcal H^r$. This, when combined with \eqref{eq:Pierra}, leads to
\begin{align}\label{pr:norm:2} \nonumber
  \|P_{\mathbf D}P_{\mathbf C}P_{\mathbf D}-P_{\mathbf C\cap\mathbf D}\|
  & =\|P_{\mathbf D}P_{\mathbf C}P_{\mathbf D}-P_{\mathbf C\cap\mathbf D}P_{\mathbf D}\| \\ \nonumber
  & =\sup\left\{ \|P_{\mathbf D}P_{\mathbf C}P_{\mathbf D}(\mathbf x)-P_{\mathbf C\cap\mathbf D}P_{\mathbf D}(\mathbf x)\| \mid \mathbf x \in \mathbf B \right\} \\ \nonumber
  & = \sup\left\{ \|P_{\mathbf D}P_{\mathbf C}(\mathbf y)-P_{\mathbf C\cap\mathbf D}(\mathbf y)\| \mid \mathbf y \in P_{\mathbf D}(\mathbf B) \right\}\\ \nonumber
  & = \sup\left\{ \|P_{\mathbf D}P_{\mathbf C}(\mathbf y)-P_{\mathbf C\cap\mathbf D}(\mathbf y)\| \mid \mathbf y \in \mathbf D \cap \mathbf B \right\}\\ \nonumber
  & = \sup\left\{ \|T(y)-P_M(y)\| \mid y \in \mathcal H \text{ and } \|y\|\leq 1 \right\}\\
  & = \|T-P_M\|.
\end{align}
In order to complete the proof we show that $\cos(\mathbf C, \mathbf D)^2=\frac{r-1}r\cos(M_1,\ldots,M_r)+ \frac 1 r$, where we follow the argument from the proof of \cite[Proposition 3.6]{BadeaGrivauxMuller2011}. Indeed, by the definition of $\mathbf C$ and $\mathbf D$, we have $\mathbf x=(x_1,\ldots,x_r) \in \mathbf C\cap (\mathbf C\cap \mathbf D)^\perp$ if and only if each $x_i\in M_i\cap M^\perp$. Moreover, $\mathbf y=(y,\ldots,y) \in \mathbf D\cap (\mathbf C\cap \mathbf D)^\perp$ if and only if $y\in M^\perp$. Consequently,
\begin{align}\label{pr:norm:3}\nonumber
  \cos(\mathbf C, \mathbf D)^2
  & = \sup\left\{
  \frac{|\langle \mathbf x, \mathbf y\rangle|^2}{\|\mathbf x\|^2 \|\mathbf y\|^2} \mid \mathbf x \in \mathbf C\cap (\mathbf C\cap \mathbf D)^\perp,
  \mathbf y \in \mathbf D\cap (\mathbf C\cap \mathbf D)^\perp \text{ and } \mathbf x, \mathbf y\neq 0
  \right\}\\ \nonumber
  & = \sup\left\{
  \frac{|\langle \frac 1 r \sum_{i=1}^r x_i, y\rangle|^2}{\frac 1 r \sum_{i=1}^r\|x_i\|^2 \| y\|^2} \mid x_i \in M_i\cap M^\perp, \sum_{i=1}^r \|x_i\|^2\neq 0 \text{ and }
   y \in M^\perp, y\neq 0  \right\} \\ \nonumber
  & = \sup\left\{
  \frac{|\langle \frac 1 r \sum_{i=1}^r x_i, y\rangle|^2}{\frac 1 r \sum_{i=1}^r\|x_i\|^2 \| y\|^2} \mid x_i \in M_i\cap M^\perp, \sum_{i=1}^r \|x_i\|^2\neq 0 \text{ and }
   y \in \mathcal H, y\neq 0  \right\}\\ \nonumber
  & = \sup\left\{
  \frac{\| \sum_{i=1}^r x_i\|^2}{r \sum_{i=1}^r\|x_i\|^2 } \mid x_i \in M_i\cap M^\perp,\  \sum_{i=1}^r \|x_i\|^2\neq 0 \right\} \\
  & = \frac{r-1}r\cos(M_1,\ldots,M_r)+ \frac 1 r,
\end{align}
where the last equality follows from
\begin{align}\label{}
  \frac{\| \sum_{i=1}^r x_i\|^2}{r \sum_{i=1}^r\|x_i\|^2 }
  & = \frac{\| \sum_{i=1}^r x_i\|^2-\sum_{i=1}^r\|x_i\|^2}{r \sum_{i=1}^r\|x_i\|^2 }+\frac 1 r
   = \frac{r-1}{r} \cdot \frac{\sum_{i\neq j}\langle x_i, x_j\rangle}{(r-1)\sum_{i=1}^r\|x_i\|^2} + \frac 1 r.
\end{align}
This completes the proof.
\end{proof}

\begin{remark}
Observe that, by \eqref{pr:norm:1}--\eqref{pr:norm:3}, we have
\begin{equation}\label{BGM}
\left\|\frac 1 r \sum_{i=1}^r P_{M_i} - P_M\right\|
=\cos(\mathbf C, \mathbf D)^{2}
=\frac{r-1}{r}\cos(M_1,\ldots,M_r)+\frac 1 r.
\end{equation}
The above equalities also follow from \cite[Proposition 3.6 and Proposition 3.7]{BadeaGrivauxMuller2011}. However, our proof for the first equality in \eqref{BGM} differs from the one presented in \cite{BadeaGrivauxMuller2011}.
\end{remark}

\begin{example}[Example \ref{ex:Pierra1} revisited]\label{ex:Pierra2}
In the setting of Example \ref{ex:Pierra1}, a direct application of Pierra's formalization in a product space leads to an estimate which, in view of Theorem \ref{th:norm}, is not the optimal one. The remedy to this problem is to consider $P_{\mathbf D}P_{\mathbf C}P_{\mathbf D}$ instead of $P_{\mathbf D}P_{\mathbf C}$. Indeed, for any $\mathbf x=(x,\ldots,x)\in \mathbf D$, we have $(P_{\mathbf D}P_{\mathbf C}P_{\mathbf D})^k (\mathbf x)=(P_{\mathbf D}P_{\mathbf C})^k(\mathbf x)$ and consequently,
\begin{equation}
  \|T^k(x) - P_M(x)\|
  = \|(P_{\mathbf D}P_{\mathbf C}P_{\mathbf D})^k (\mathbf x) - P_{\mathbf C\cap\mathbf D} (\mathbf x)\|
  \leq \cos(\mathbf C, \mathbf D)^{2k} \|\mathbf x\|
  = \cos(\mathbf C, \mathbf D)^{2k} \|x\|.
\end{equation}
Although the above inequality recovers the optimal error bound from Theorem \ref{th:norm}, it does not explain why this estimate is optimal.
\end{example}

\begin{remark}[Two subspaces]\label{rem:alternatingPM}
Let $M_1,M_2\subseteq\mathcal H$ be closed linear subspaces and let $M:=M_1\cap M_2$. By \eqref{eq:AKW} and Theorem \ref{th:norm},
\begin{align}\nonumber
  \| (P_{M_2}P_{M_1})^k - P_M\|&
  = \cos(M_1,M_2)^{2k-1}
  \leq \left(\frac 1 2 \cos(M_1,M_2)+\frac 1 2\right)^{2k-1}\\
  &\leq \left(\frac 1 2 \cos(M_1,M_2)+\frac 1 2\right)^k
  =\left\| \left(\frac{P_{M_1}+P_{M_2}}{2}\right)^k - P_M\right\|,
\end{align}
where the inequalities are strict whenever $\cos(M_1,M_2)<1$. This somehow explains why, in general, the alternating projection method is indeed faster than its simultaneous variant whenever we have linear convergence. The numerical verification of this observation can be found, for example, in \cite[Fig. 1, p. 1071]{CensorChenCombettesDavidiHerman2012}.
\end{remark}

Next, we recall the following fact.

\begin{fact}\label{th:equiv}
Let $M_1,\ldots,M_r\subseteq\mathcal H$ be closed linear subspaces and let $M:=\bigcap_i M_i$. Moreover, let $\mathbf C$ and $\mathbf D$ be as in Example \ref{ex:Pierra1}. The following conditions are equivalent:
\begin{enumerate}[(i)]
  \item $\sum_{i=1}^r M_i^\perp$ is closed.
  \item $\cos(M_1,\ldots,M_r)<1$ (subspaces are not aligned).
  \item $\left\|\frac 1 r \sum_{i=1}^r P_{M_i} - P_M\right\|<1$.
  \item $\|P_{\mathbf D}P_{\mathbf C}-P_{\mathbf C \cap\mathbf D}\|<1$.
  \item $\cos(\mathbf C,\mathbf D)<1$.
  \item $\mathbf C^\perp+\mathbf D^\perp$ is closed in $\mathcal H^r$.
\end{enumerate}
\end{fact}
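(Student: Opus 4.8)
The plan is to route all six conditions through the single quantity $\cos(\mathbf C,\mathbf D)$, so that the bulk of the work is already carried by Theorem~\ref{th:norm}. First, by Theorem~\ref{th:norm} and the equalities \eqref{BGM},
\[
\left\|\tfrac 1 r \sum_{i=1}^r P_{M_i} - P_M\right\| \;=\; \cos(\mathbf C,\mathbf D)^2 \;=\; \tfrac{r-1}{r}\cos(M_1,\ldots,M_r)+\tfrac 1 r .
\]
Since $\cos(M_1,\ldots,M_r)\in[0,1]$ and $\cos(\mathbf C,\mathbf D)\in[0,1]$, and since for $r\geq 2$ the affine map $t\mapsto\frac{r-1}{r}t+\frac 1 r$ is strictly increasing and maps $[0,1)$ onto $[1/r,1)$, each of these three numbers is $<1$ exactly when the other two are; this yields (ii)$\Leftrightarrow$(iii)$\Leftrightarrow$(v). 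Next, I would apply \eqref{eq:AKW} with $k=1$ to the pair $(\mathbf C,\mathbf D)$, obtaining $\|P_{\mathbf D}P_{\mathbf C}-P_{\mathbf C\cap\mathbf D}\|=\cos(\mathbf C,\mathbf D)$, so that (iv)$\Leftrightarrow$(v) is immediate; and I would apply the classical two-subspace characterization that $\cos(N_1,N_2)<1$ if and only if $N_1^\perp+N_2^\perp$ is closed (\cite[Theorem 9.35]{Deutsch2001}) to $N_1=\mathbf C$, $N_2=\mathbf D$, which gives (v)$\Leftrightarrow$(vi). Here I would note that the rescaled inner product on $\mathcal H^r$ affects neither its topology nor the value of $\cos(\mathbf C,\mathbf D)$, so the cited results apply verbatim.

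It then remains to prove (i)$\Leftrightarrow$(vi). First I would compute, in $(\mathcal H^r,\langle\cdot,\cdot\rangle)$, that $\mathbf C^\perp=M_1^\perp\times\cdots\times M_r^\perp$ and $\mathbf D^\perp=\{\mathbf y=(y_1,\ldots,y_r)\mid\sum_{i=1}^r y_i=0\}$. Introducing the bounded linear map $S\colon\mathcal H^r\to\mathcal H$ defined by $S(\mathbf x)=\sum_{i=1}^r x_i$, a short direct argument with these descriptions shows
\[
\mathbf C^\perp+\mathbf D^\perp \;=\; S^{-1}\!\Big(\sum_{i=1}^r M_i^\perp\Big).
\]
Indeed, the inclusion $\subseteq$ is clear, and for $\supseteq$, if $\sum_i z_i=\sum_i u_i$ with $u_i\in M_i^\perp$, then $\mathbf z=\mathbf u+(\mathbf z-\mathbf u)$ with $\mathbf u\in\mathbf C^\perp$ and $\mathbf z-\mathbf u\in\mathbf D^\perp$. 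Since $S$ is surjective ($S(x,0,\ldots,0)=x$), it is an open map by the open mapping theorem. Consequently, if $\sum_i M_i^\perp$ is closed then so is its $S$-preimage $\mathbf C^\perp+\mathbf D^\perp$ by continuity of $S$; conversely, if $\mathbf C^\perp+\mathbf D^\perp=S^{-1}(\sum_i M_i^\perp)$ is closed, then the open set $S^{-1}(\mathcal H\setminus\sum_i M_i^\perp)$ is mapped by the open surjection $S$ onto $\mathcal H\setminus\sum_i M_i^\perp$, so $\sum_i M_i^\perp$ is closed. This establishes (i)$\Leftrightarrow$(vi) and closes the web of equivalences. (Alternatively, (i)$\Leftrightarrow$(ii) could be quoted from \cite[Proposition 3.7]{BadeaGrivauxMuller2011}, but the product-space argument fits the framework of the present paper.)

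I expect the only genuinely delicate point to be (i)$\Leftrightarrow$(vi): identifying $\mathbf C^\perp$ and $\mathbf D^\perp$ correctly for the rescaled inner product, verifying the identity $\mathbf C^\perp+\mathbf D^\perp=S^{-1}(\sum_i M_i^\perp)$, and — for the nontrivial implication — exploiting that $S$ is open and surjective rather than merely continuous. Everything else is essentially bookkeeping on top of Theorem~\ref{th:norm}, \eqref{eq:AKW}, and the classical two-subspace result.
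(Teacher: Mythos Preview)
Your proof is correct, but it diverges from the paper's in how condition (i) is tied into the chain. For (ii)$\Leftrightarrow$(iii)$\Leftrightarrow$(iv)$\Leftrightarrow$(v) both approaches amount to reading off \eqref{eq:AKW} and \eqref{BGM}, so there is no real difference there. The substantive contrast is this: the paper proves (i)$\Leftrightarrow$(iii) by invoking the dichotomy Theorem~\ref{th:dichotomy} (closedness $\Rightarrow$ linear convergence $\Rightarrow$ $\|T^k-P_M\|<1$ for some $k$, hence $\|T-P_M\|<1$ via Theorem~\ref{th:norm}; and conversely, non-closedness $\Rightarrow$ arbitrarily slow convergence, contradicting (iii)), and then says (v)$\Leftrightarrow$(vi) follows by the same dichotomy argument applied in $\mathcal H^r$. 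You instead prove (v)$\Leftrightarrow$(vi) directly from the classical two-subspace criterion \cite[Theorem~9.35]{Deutsch2001}, and then give an elementary (i)$\Leftrightarrow$(vi) via the identity $\mathbf C^\perp+\mathbf D^\perp=S^{-1}\big(\sum_i M_i^\perp\big)$ together with the open mapping theorem. Your route is more self-contained --- it avoids importing the nontrivial Theorem~\ref{th:dichotomy} as a black box --- and has the pleasant side effect of giving a clean structural explanation of why closedness in $\mathcal H$ and in $\mathcal H^r$ are equivalent. The paper's route, on the other hand, keeps everything phrased in terms of convergence behavior, which matches the surrounding narrative. One minor remark: the alternative citation you suggest for (i)$\Leftrightarrow$(ii) would more naturally be \cite[Theorem~4.1 and Proposition~3.9]{BadeaGrivauxMuller2011} (via linear regularity), as noted in the paper's subsequent remark, rather than Proposition~3.7.
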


\begin{proof}
By \eqref{eq:AKW} applied to $\mathbf C$ and $\mathbf D$, and \eqref{BGM}, we have (ii) $\Leftrightarrow$ (iii) $ \Leftrightarrow$ (iv) $\Leftrightarrow$ (v). In order to complete the proof it suffices to show that (i) $\Leftrightarrow$ (iii) holds. The equivalence (v) $\Leftrightarrow$ (vi) will follow by using a similar argument, but in the product space $\mathcal H^r$.

Assume that (i) holds. Then, by Theorem \ref{th:dichotomy} (i), there are $c>0$ and $q\in (0,1)$ such that $\|T^k-P_M\|\leq c q^k$, where $T:=\frac 1 r \sum_{i=1}^r P_{M_i}$. This implies that $\|T^k-P_M\|<1$ for some integer $k\geq 1$. Since, by Theorem \ref{th:norm}, $\|T^k-P_M\|=\|T-P_M\|^k$, we conclude that $\|T-P_M\|<1$ too.

Now assume that (iii) holds and (i) does not, that is, $\sum_{i=1}^r M_i^\perp$ is not closed.  By Theorem \ref{th:dichotomy}, $T^k$ converges arbitrarily slowly to $P_M$. This is in contradiction with assumption (iii), in view of which $T^k$ converges linearly to $P_M$. This completes the proof.
\end{proof}

\begin{remark}
The equivalence (i) $\Leftrightarrow$ (ii) from Fact \ref{th:equiv} can be found in  \cite[Theorem 4.1, (3) $\Leftrightarrow$ (11)]{BadeaGrivauxMuller2011}. The argument follows from the fact that $\cos(M_1,\ldots,M_r)<1$ if and only if the family $\{M_1,\ldots,M_r\}$ is linearly regular \cite[Proposition 3.9]{BadeaGrivauxMuller2011}, which, by \cite[Theorem 5.19]{BauschkeBorwein1996}, is equivalent to (i). As we have already mentioned in the Introduction, for the case of $r=2$, an independent proof can be found in \cite[Theorem 9.35]{Deutsch2001}.
\end{remark}

We can now extend Theorems \ref{th:dichotomy} and \ref{th:dichotomyBS} in the case of the simultaneous projection method.

\begin{theorem}[Dichotomy with optimal error bound]\label{th:dichLin}
Let $M_1,\ldots,M_r\subseteq\mathcal H$ be closed linear subspaces, $M:=\bigcap_i M_i$ and let $T:=\frac 1 r \sum_{i=1}^r P_{M_i}$. Then exactly one of the following two statements holds:
\begin{enumerate}[(i)]
  \item $\sum_{i=1}^r M_i^\perp$ is closed. Then $T^k$ converges linearly to $P_M$ and
      \begin{equation}
      q=\frac{r-1}{r}\cos(M_1,\ldots,M_r)+\frac 1 r
      \end{equation}
      is the smallest possible number, independent of $x$, in the set of all $q\in(0,1)$ such that
      \begin{equation}
      \left\|T^k(x) - P_M(x)\right\|
      \leq q^k \|x\|
      \end{equation}
      for all $x\in\mathcal H$.
  \item $\sum_{i=1}^r M_i^\perp$ is not closed. Then $T^k$ converges arbitrarily slowly to $P_M$. Moreover, there is a dense linear subspace $X\subseteq\mathcal H$ on which $T^k$ converges super-polynomially fast to $P_M$.
\end{enumerate}
\end{theorem}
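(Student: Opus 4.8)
The dichotomy itself is immediate: $\sum_{i=1}^r M_i^\perp$ is either closed or not, and the two cases are mutually exclusive, so the only work is to establish the quantitative content of (i) and (ii). I would treat the two parts separately, using Theorem~\ref{th:norm} and Fact~\ref{th:equiv} for (i), and Theorem~\ref{th:dichotomy} together with spectral calculus for (ii).

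For part~(i), set $q:=\frac{r-1}{r}\cos(M_1,\ldots,M_r)+\frac 1 r$. Since $\cos(M_1,\ldots,M_r)\in[0,1]$ we have $q\in[\frac 1 r,1]$, and the closedness hypothesis forces $\cos(M_1,\ldots,M_r)<1$ by Fact~\ref{th:equiv}, so $q\in(0,1)$. By Theorem~\ref{th:norm}, $\|T^k-P_M\|=q^k$ for all $k$, hence $\|T^k(x)-P_M(x)\|\leq q^k\|x\|$ for all $x$ and all $k$, which is linear convergence (with $c=1$ and $f(x)=\|x\|$). For optimality, if $q'\in(0,1)$ satisfies $\|T^k(x)-P_M(x)\|\leq (q')^k\|x\|$ for all $x\in\mathcal H$ and all $k$, then taking the supremum over the unit ball gives $q^k=\|T^k-P_M\|\leq (q')^k$ for every $k$, and with $k=1$ we get $q\leq q'$; thus $q$ lies in the set of admissible constants and is its minimum.

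For part~(ii), the arbitrarily slow convergence is exactly Theorem~\ref{th:dichotomy}(ii) applied to $T=\frac 1 r\sum_{i=1}^r P_{M_i}$, so only the super-polynomial statement requires an argument, and here I would bypass Pierra's formalism and argue by spectral calculus directly. The operator $T$ is self-adjoint with $0\leq T\leq \id$ and $\fix T=M$, and each $P_{M_i}$ --- hence $T$ --- leaves $M^\perp$ invariant, since for $u\in M^\perp$ and $m\in M\subseteq M_i$ one has $\langle P_{M_i}u,m\rangle=\langle u,P_{M_i}m\rangle=\langle u,m\rangle=0$. Let $T_0:=T|_{M^\perp}$, with spectral measure $E$ on $[0,1]$. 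Because $\fix T=M$, the number $1$ is not an eigenvalue of $T_0$ (such an eigenvector would lie in $\fix T\cap M^\perp=M\cap M^\perp=\{0\}$), so $E(\{1\})=0$ and therefore $E\big([0,1-\tfrac 1 n]\big)\to E\big([0,1)\big)=\id$ strongly on $M^\perp$. Set $X_0:=\bigcup_{n\geq 1}\image E\big([0,1-\tfrac 1 n]\big)$; as an increasing union of closed subspaces it is a linear subspace, and it is dense in $M^\perp$ since $E\big([0,1-\tfrac 1 n]\big)u\to u$ for every $u\in M^\perp$. For $u\in\image E\big([0,1-\tfrac 1 n]\big)$ one has $\|T_0^k u\|\leq (1-\tfrac 1 n)^k\|u\|$, which decays exponentially, hence super-polynomially. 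Finally, $X:=X_0+M$ is a dense linear subspace of $\mathcal H$ (approximate the $M^\perp$-component of any vector by elements of $X_0$), and for $x=u+m\in X$ with $u\in X_0$, $m\in M$, linearity together with $T^k m=m$, $P_M m=m$, $P_M u=0$ and $T^k u\in M^\perp$ gives $T^k(x)-P_M(x)=T_0^k u$, so $T^k$ converges super-polynomially (indeed linearly on each piece) to $P_M$ on $X$.

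The main obstacle is the super-polynomial claim: part~(i) is bookkeeping on top of Theorem~\ref{th:norm} and Fact~\ref{th:equiv}, and the arbitrarily slow statement is a direct citation. The natural-looking alternative route for the super-polynomial part --- apply Theorem~\ref{th:dichotomyBS} to the subspaces $\mathbf C$ and $\mathbf D$ (legitimate since, by Fact~\ref{th:equiv}(vi), $\mathbf C^\perp+\mathbf D^\perp$ fails to be closed exactly when $\sum_i M_i^\perp$ does) and push the resulting dense subspace back along $\mathbf D\cong\mathcal H$ via \eqref{eq:Pierra} --- runs into the difficulty that a dense linear subspace of $\mathcal H^r$ need not intersect the diagonal $\mathbf D$ in a dense subspace, so it is unclear whether the subspace furnished by Theorem~\ref{th:dichotomyBS} can be chosen inside $\mathbf D$. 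The spectral argument sidesteps this; the delicate point there is precisely that $1$ is not an eigenvalue of $T|_{M^\perp}$, which is what forces $E(\{1\})=0$ and makes $X_0$ dense.
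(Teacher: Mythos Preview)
Your proof is correct. Part~(i) and the arbitrarily-slow half of part~(ii) match the paper exactly, but for the super-polynomial claim you take a genuinely different route: the paper does go through Pierra's product-space reformulation and Theorem~\ref{th:dichotomyBS}, whereas you argue directly by spectral calculus for the self-adjoint contraction $T|_{M^\perp}$. Your argument is more elementary and self-contained --- it does not invoke Theorem~\ref{th:dichotomyBS} (and hence not the complexification machinery in the Appendix) at all, and in fact yields the stronger conclusion that on each piece $\operatorname{im}E([0,1-\tfrac1n])+M$ the convergence is linear. The paper's route, on the other hand, is thematically coherent with the rest of the article and illustrates once more how the product-space formalism transfers information back to $\mathcal H$.

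One remark on your closing paragraph: the obstacle you flag in the Pierra approach --- that a dense subspace $\mathbf X\subseteq\mathcal H^r$ need not meet the diagonal $\mathbf D$ densely --- is precisely what the paper circumvents, not by intersecting but by \emph{projecting}. Using $(P_{\mathbf D}P_{\mathbf C})^kP_{\mathbf D}=P_{\mathbf D}(P_{\mathbf C}P_{\mathbf D})^k$ and the nonexpansiveness of $P_{\mathbf D}$, the super-polynomial rate passes from $\mathbf X$ to $P_{\mathbf D}(\mathbf X)\subseteq\mathbf D$, and continuity of $P_{\mathbf D}$ makes $P_{\mathbf D}(\mathbf X)$ dense in $\mathbf D$. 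So the Pierra route does go through; you simply found a shorter way around it.
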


\begin{proof}
If $\sum_{i=1}^r M_i^\perp$ is closed, then linear convergence and the optimal error bound  follow from Theorem \ref{th:norm} and Fact \ref{th:equiv}.

Assume now that $\sum_{i=1}^r M_i^\perp$ is not closed. The arbitrarily slow convergence is an immediate consequence of Theorem \ref{th:dichotomy} (ii). In order to complete the proof, we have to show that the convergence is super-polynomially fast on some dense linear subspace $X\subseteq\mathcal H$. To this end, we apply the alternating projection formalization discussed in Example \ref{ex:Pierra1}.

Indeed, by Fact \ref{th:equiv}, $\mathbf C^\perp+\mathbf D^\perp$ is not closed in $\mathcal H^r$. Consequently, by Theorem \ref{th:dichotomyBS}, there is a dense linear subspace $\mathbf X\subseteq \mathcal H^r$ on which $(P_{\mathbf C}P_{\mathbf D})^k$ converges super-polynomially fast to $P_{\mathbf C\cap\mathbf D}$. Note that since $P_{\mathbf D}$ is nonexpansive,\ for each $\mathbf x\in\mathcal H^r$, we have
\begin{align}\nonumber
\|(P_{\mathbf D}P_{\mathbf C})^kP_{\mathbf D}\mathbf (\mathbf x)-P_{\mathbf C\cap\mathbf D} (\mathbf x)\|
&=\|P_{\mathbf D}(P_{\mathbf C}P_{\mathbf D})^k\mathbf (\mathbf x)-P_{\mathbf D}P_{\mathbf C\cap\mathbf D} (\mathbf x)\|\\
&\leq \|(P_{\mathbf C}P_{\mathbf D})^k\mathbf (\mathbf x)-P_{\mathbf C\cap\mathbf D} (\mathbf x)\|.
\end{align}
Consequently, for every $\mathbf x \in \mathbf X$ and $\alpha>0$, we have
\begin{equation}
\frac{\|(P_{\mathbf D}P_{\mathbf C})^kP_{\mathbf D}\mathbf (\mathbf x)-P_{\mathbf C\cap\mathbf D} (\mathbf x)\|}{k^{-\alpha}}
 \longrightarrow 0 \quad \text{ as } k\rightarrow\infty.
\end{equation}
This implies that $(P_{\mathbf D}P_{\mathbf C})^k$ converges super-polynomially fast to $P_{\mathbf C\cap\mathbf D}$ on $P_{\mathbf D}(\mathbf X)$. On the other hand, since $P_{\mathbf D}(\mathbf X)\subseteq \mathbf D$, we can define
\begin{equation}
X:=\{x\in\mathcal H\mid \mathbf x=(x,\ldots,x)\in P_{\mathbf D}(\mathbf X)\}.
\end{equation}
Observe that $X$ is a linear subspace of $\mathcal H$ because $P_{\mathbf D}(\mathbf X)$ is a linear subspace of $\mathcal H^r$, where the latter fact follows from the linearity of $P_{\mathbf D}$. Moreover, by \eqref{eq:Pierra}, for each $x\in X$ and $\alpha>0$, we have
\begin{equation}
\frac{\|T^k(x) - P_M(x)\|}{k^{-\alpha}}
= \frac{\|(P_{\mathbf D}P_{\mathbf C})^k (\mathbf x) - P_{\mathbf C\cap\mathbf D} (\mathbf x)\|}{k^{-\alpha}} \longrightarrow 0 \quad \text{ as } k\rightarrow\infty,
\end{equation}
where $\mathbf x=(x,\ldots,x)$. Consequently, $T^k$ converges super-polynomially fast to $P_M$ on $X$. It remains to prove that $X$ is dense in $\mathcal H$ or, equivalently, that $P_{\mathbf D}(\mathbf X)$ is dense in $\mathbf D$. Note that the second statement follows from the continuity of the metric projection $P_\mathbf D$ as we now show. Indeed, let $\mathbf x \in \mathbf D$. Since $\mathbf X$ is dense in $\mathcal H^r$, there is a sequence $\{\mathbf x_k\}_{k=0}^\infty\subseteq \mathbf X$ such that $\mathbf x_k\rightarrow \mathbf x$ and the above-mentioned continuity yields $P_{\mathbf D}(\mathbf x_k)\rightarrow P_{\mathbf D}(\mathbf x)=\mathbf x$. This completes the proof.
\end{proof}

\begin{corollary}[Affine subspaces]\label{th:dichAff}
Let $V_1,\ldots,V_r\subseteq\mathcal H$ be closed affine subspaces and assume that $V:=\bigcap_i V_i\neq\emptyset$. Moreover, let $T:=\frac 1 r \sum_{i=1}^r P_{V_i}$. Then exactly one of the following two statements holds:
\begin{enumerate}[(i)]
  \item $\sum_{i=1}^r (V_i-V_i)^\perp$ is closed. Then $T^k$ converges linearly to $P_V$ and
      \begin{equation}\label{th:dichAff:1}
      q=\frac{r-1}{r}\cos(V_1,\ldots,V_r)+\frac 1 r
      \end{equation}
      is the smallest possible number, independent of $x$, in the set of all $q\in(0,1)$ such that
      \begin{equation}\label{th:dichAff:2}
      \left\|T^k(x) - P_V(x)\right\|
      \leq q^k \|x-P_V(0)\|
      \end{equation}
      for all $x\in \mathcal H$, where $\cos(V_1,\ldots,V_r):=\cos(V_1-V_1,\ldots,V_r-V_r)$.
  \item $\sum_{i=1}^r (V_i-V_i)^\perp$ is not closed. Then $T^k$ converges arbitrarily slowly to $P_V$. Moreover, there is a dense affine subspace $Y\subseteq\mathcal H$ on which $T^k$ converges super-polynomially fast to $P_V$.
\end{enumerate}
\end{corollary}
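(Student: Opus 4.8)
The plan is to reduce the whole statement to the linear case via a translation. I would fix the reference point $v:=P_V(0)\in V$. Since $v$ lies in each $V_i$, the set $M_i:=V_i-v$ is a closed linear subspace; because $V_i$ is affine, $M_i$ is independent of which point of $V_i$ we subtract, so in fact $M_i=V_i-V_i$, and $M:=\bigcap_i M_i=V-v$. The two elementary facts I would record are the standard identity $P_{V_i}(x)=v+P_{M_i}(x-v)$, which also gives $P_V(x)=v+P_M(x-v)$, and its consequence for the averaged operator: writing $S:=\frac{1}{r}\sum_i P_{M_i}$, one has $T(x)=v+S(x-v)$, hence by an immediate induction $T^k(x)=v+S^k(x-v)$ for all $k$. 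Subtracting, $T^k(x)-P_V(x)=(S^k-P_M)(x-v)=(S^k-P_M)(x-P_V(0))$. This single identity, together with the facts that $\sum_i M_i^\perp=\sum_i(V_i-V_i)^\perp$ and $\cos(M_1,\ldots,M_r)=\cos(V_1,\ldots,V_r)$ (the definition adopted in the statement), will do almost all the work, and "exactly one" of (i), (ii) holds simply because the hypothesis is a dichotomy on a single closedness property.

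For alternative (i), I would apply Theorem \ref{th:dichLin}(i) to $S$: since $\sum_i M_i^\perp$ is closed, $S^k$ converges linearly to $P_M$ and $q=\frac{r-1}{r}\cos(M_1,\ldots,M_r)+\frac{1}{r}$ is the smallest $q\in(0,1)$ with $\|S^k(y)-P_M(y)\|\le q^k\|y\|$ for all $y\in\mathcal H$. From the displayed identity, $\|T^k(x)-P_V(x)\|=\|(S^k-P_M)(x-P_V(0))\|\le q^k\|x-P_V(0)\|$, which is \eqref{th:dichAff:2} (with constant $c=1$) and in particular gives linear convergence of $T^k$ to $P_V$ in the sense used in the paper for affine subspaces. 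For optimality: if some $q'<q$ satisfied \eqref{th:dichAff:2} for all $x$, then substituting $x=P_V(0)+y$ would yield $\|S^k(y)-P_M(y)\|\le(q')^k\|y\|$ for all $y$, contradicting the minimality of $q$ in Theorem \ref{th:dichLin}(i). This settles \eqref{th:dichAff:1}.

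For alternative (ii), Theorem \ref{th:dichLin}(ii) applied to $S$ gives that $S^k$ converges arbitrarily slowly to $P_M$ and that there is a dense linear subspace $X\subseteq\mathcal H$ on which $S^k$ converges super-polynomially fast to $P_M$. Using $T^k(x)-P_V(x)=(S^k-P_M)(x-P_V(0))$ again: given any positive sequence $a_k\to 0$, choose $y$ with $\|S^k(y)-P_M(y)\|\ge a_k$ for all $k$ and put $x=P_V(0)+y$, so that $\|T^k(x)-P_V(x)\|\ge a_k$; hence $T^k$ converges arbitrarily slowly to $P_V$. Similarly, $Y:=P_V(0)+X$ is an affine subspace, dense in $\mathcal H$ because translation by $P_V(0)$ is a homeomorphism, and for $x=P_V(0)+y\in Y$ one has $\|T^k(x)-P_V(x)\|/k^{-\alpha}=\|S^k(y)-P_M(y)\|/k^{-\alpha}\to 0$ for every $\alpha>0$. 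Throughout, the pointwise convergence $T^k(x)\to P_V(x)$ that makes these notions meaningful follows from $S^k\to P_M$ pointwise (Theorem \ref{th:LapidusReich}) via the same translation.

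The argument is essentially routine; the only place that requires care is the bookkeeping around the reference point. One must translate by $v=P_V(0)$ rather than by an arbitrary point of $V$, so that the factor $\|x-v\|$ produced by the translation is exactly the $\|x-P_V(0)\|$ appearing in \eqref{th:dichAff:2}, and one must then check that, with this particular choice, the sharpness of $q$ transfers verbatim from Theorem \ref{th:dichLin}(i) — which it does, precisely because $x\mapsto P_V(0)+y$ sweeps out all of $\mathcal H$ as $y$ does.
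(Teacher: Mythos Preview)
Your proposal is correct and follows essentially the same route as the paper: fix $v=P_V(0)$, reduce to the linear subspaces $M_i=V_i-V_i$ via the translation identity $T^k(x)-P_V(x)=\bigl(\tfrac1r\sum_iP_{M_i}\bigr)^k(x-v)-P_M(x-v)$, and then invoke Theorem~\ref{th:dichLin} (together with Theorem~\ref{th:norm} for optimality) in each alternative, setting $Y:=X+v$ in case~(ii). The only cosmetic difference is that you spell out the transfer of arbitrarily slow convergence via the definition, whereas the paper simply cites it as an immediate consequence of the translation identity.
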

\begin{proof}
The proof is based on the translation formula
\begin{equation}\label{proof:dichAff:translation}
P_C(x)=P_{C-v}(x-v)+v,
\end{equation}
which holds true for every closed and convex set $C\subseteq\mathcal H$, $x,v\in\mathcal H$. Note that for any $x\in\mathcal H$ and $v\in V$, we have $V_i=M_i+v$ and $V=M+v$, where $M_i:=V_i-V_i$ and $M:=V-V$ are closed linear subspaces. This holds, in particular, for $v=P_V(0)$ which, when combined with an induction argument, leads to
\begin{equation}\label{proof:dichAff:3}
\left(\frac 1 r \sum_{i=1}^rP_{V_i}\right)^k(x)-P_V(x)
=\left(\frac 1 r \sum_{i=1}^rP_{M_i}\right)^k(x-v)-P_M(x-v).
\end{equation}

If $\sum_i M_i^\perp$ is closed, then, by Theorem \ref{th:dichLin} (i) and \eqref{proof:dichAff:3}, estimate \eqref{th:dichAff:2} holds with $q$ defined as in \eqref{th:dichAff:1}. If there were another $0<q<\frac{r-1}{r}\cos(V_1,\ldots,V_r)+\frac 1 r$ for which \eqref{th:dichAff:2} holds, then, by \eqref{proof:dichAff:3}, this would imply that
\begin{equation}
\left\|\left(\frac 1 r \sum_{i=1}^r P_{M_i}\right)^k - P_M\right\|
\leq q^k
< \left(\frac{r-1}{r}\cos(V_1,\ldots,V_r)+\frac 1 r\right)^k,
\end{equation}
which is impossible in view of Theorem \ref{th:norm}.

If $\sum_i M_i^\perp$ is not closed, then, by Theorem \ref{th:dichLin} (ii), $\left(\frac 1 r \sum_{i=1}^r P_{M_i}\right)^k$ converges arbitrarily slowly to $P_M$. This implies, by \eqref{proof:dichAff:3}, that $\left(\frac 1 r \sum_{i=1}^r P_{V_i}\right)^k$ also converges arbitrarily slowly to $P_V$. Moreover, by Theorem \ref{th:dichLin} (ii), there is a dense subspace $X\subseteq\mathcal H$ on which $\left(\frac 1 r \sum_{i=1}^r P_{M_i}\right)^k$ converges super-polynomially fast to $P_M$. It is easy to see that, by \eqref{proof:dichAff:3}, $\left(\frac 1 r \sum_{i=1}^r P_{V_i}\right)^k$ converges super-polynomially fast to $P_V$ on $Y:=X+v$, which in its turn is a dense affine subspace of $\mathcal H$.
\end{proof}

\begin{remark}(Cyclic projection method)
Let $M_1,\ldots,M_r\subseteq\mathcal H$ be closed linear subspaces and let $M:=\bigcap_i M_i$. By \cite[Theorem 1]{KayalarWeinert1988}, $P_{M_r\cap M^\perp}\ldots P_{M_1\cap M^\perp}=P_{M_r}\ldots P_{M_1}-P_M$. Moreover, by Lemma \ref{th:basic} applied to $T:=P_{M_r}\ldots P_{M_1}$, we see that
\begin{equation}
(P_{M_r}\ldots P_{M_1}  )^k-P_M
= (P_{M_r}\ldots P_{M_1}  -P_M)^k
= (P_{M_r\cap M^\perp}\ldots P_{M_1\cap M^\perp})^k,
\end{equation}
which leads to the following (not necessarily optimal) error bound:
\begin{equation}
\left\| (P_{M_r}\ldots P_{M_1}  )^k(x)-P_M(x)\right\|
\leq \|P_{M_r\cap M^\perp}\ldots P_{M_1\cap M^\perp}\|^k \|x\|.
\end{equation}
If we assume that $\sum_{i=1}^r M_i^\perp$ is closed, then, by \cite[Theorem 4.1]{BadeaGrivauxMuller2011}, $\|P_{M_r\cap M^\perp}\ldots P_{M_1\cap M^\perp}\|<1$ and the convergence is indeed linear. The above estimate can be found, for example, in \cite[Remark 5.5.3]{Bauschke1996}, \cite[Lemma 11.58]{DeutschHundal2011} and \cite[Lemma 9.2]{DeutschHundal2015}. Note that for closed affine subspaces $V_1,\ldots,V_r$ with nonempty intersection $V:=\bigcap_iV_i$ we can, similarly to \eqref{proof:dichAff:3}, derive the equality
\begin{equation}
(P_{V_r}\ldots P_{V_1} )^k(x)-P_V(x)
=(P_{V_r-V_r}\ldots P_{V_1-V_1} )^k(x)-P_{V-V}(x-v)
\end{equation}
for every $v\in V$. This leads to the following error bound:
\begin{equation}
\left\| (P_{V_r}\ldots P_{V_1}  )^k(x)-P_V(x)\right\|
\leq \left\| \prod_{i=1}^r P_{(V_i-V_i)\cap (V-V)^\perp}\right\|^k \|x-P_V(0)\|,
\end{equation}
which yields linear convergence whenever $\sum_{i=1}^r(V_i-V_i)^\perp$ is closed. The above estimate can be found, for example, in \cite[Theorem 6.6]{DeutschHundal2010}.
\end{remark}

\section*{Appendix}
In this section we present an argument showing that Theorem \ref{th:dichotomyBS} which is stated here for a real Hilbert space indeed follows from \cite[Theorem 4.3]{BadeaSeifert2016}, which is shown to be true for a complex Hilbert space. The proof is based on a hand-written note which has kindly been provided to us by Catalin Badea and David Seifert.

\begin{proof}
Let $\mathcal H_{\mathbb C}=\mathcal H \times \mathcal H$ be a complexification of the real Hilbert space $\mathcal H$ equipped with the scalar product $\langle \cdot, \cdot \rangle_{\mathbb C}\colon \mathcal H_{\mathbb C} \times \mathcal H_{\mathbb C} \rightarrow \mathbb C$ defined by
\begin{equation}\label{}
  \langle x+iy, x'+iy' \rangle_{\mathbb C}:=\langle x,x'\rangle + \langle y,y'\rangle +i(\langle x,y'\rangle - \langle x',y\rangle)
\end{equation}
where the induced norm satisfies $\|x+iy\|^2_{\mathbb C}=\|x\|^2+\|y\|^2$. Consider $\tilde{M}:=M\times M$ and $\tilde M_j:=M_j\times M_j$, $j=1,\ldots,r$, which are closed linear subspaces of $\mathcal H_{\mathbb C}$. Moreover, define $\tilde P$ and $\tilde P_j$ as the complexification of $P=P_M$ and $P_j=P_{M_j}$, that is, $\tilde P (x+iy):=Px+iPy$ and $\tilde P_j (x+iy):=P_jx+iP_jy$. Observe that $\tilde P$ and $\tilde P_j$ are linear, idempotent and, in fact, are orthogonal projections onto $\tilde M$ and $\tilde M_j$, respectively. Indeed, for any $a+ib\in\mathbb C$ and $x+iy\in\mathcal H_{\mathbb C}$, by the linearity of $P$, we have
\begin{align} \nonumber
  (a+ib) \tilde P(x+iy) & = (a+ib)(Px+iPy) = aPx-bPy + i (aPy+bPx)\\
  & = P(ax-by)+i P(ay+bx) = P((a+ib)(x+iy)),
\end{align}
which shows that $\tilde P$ is linear. Since $P=P^2$, it easily follows that $\tilde P=\tilde P^2$. Moreover, for each $u+iv\in \tilde M$, since $P$ is an orthogonal projection, we get
\begin{align}\label{}\nonumber
   \langle x+iy - \tilde P (x+iy), u+iv \rangle_{\mathbb C} &
   = \langle x-Px+iy-iPy, u+iv \rangle_{\mathbb C}\\ \nonumber
   & = \langle x-Px, u\rangle + \langle y-Py, v\rangle\\
   & + i \langle x-Px, v\rangle -i \langle y-Py, u\rangle =0,
\end{align}
which shows that $\tilde P$ is an orthogonal projection too. The same argument can be repeated for each $\tilde P_j$. Finally, observe that
\begin{equation}\label{}
\tilde M^\perp= M^\perp\times M^\perp \quad \text{and} \quad
\tilde M_i^\perp= M_i^\perp\times M_i^\perp.
\end{equation}
Indeed, it is easy to see that $\tilde M^\perp \supseteq M^\perp\times M^\perp$. Moreover, if $x+iy\in \tilde M^\perp$ and $m+im \in \tilde M$, then, by the definition of $\langle \cdot, \cdot \rangle_{\mathbb C}$,
\begin{equation}
  \langle x+y,m \rangle = \langle x-y,m \rangle= 0.
\end{equation}
This implies that $x+y$, $x-y \in M^\perp$ and consequently, $x=\frac 1 2 ((x+y)+(x-y))\in M^\perp$ and $y=\frac 1 2 ((x+y)-(x-y))\in M^\perp$.

By assumption and Fact \ref{th:equiv}, we have
\begin{equation}
\cos(M_1,\ldots,M_r)=\frac 1{r-1}\sup\left\{
\frac{\sum_{j\neq k}\langle m_j,m_k\rangle}{\sum_j \|m_j\|^2}
\mid m_j\in M_j\cap M^\perp,\ \sum_j\|m_j\|^2\neq 0
\right\}=1.
\end{equation}
We claim that $\cos(\tilde M_1,\ldots,\tilde M_r)=1$. Indeed, for each $\varepsilon>0$, there are $m_j\in M_j\cap M^\perp$ such that $\sum_j \|m_j\|^2\neq 0$ and
\begin{equation}\label{}
  \frac 1{r-1}\frac{\sum_{j\neq k}\langle m_j,m_k\rangle}{\sum_j \|m_j\|^2} \geq 1-\varepsilon.
\end{equation}
Moreover, by setting $\tilde m_j:=m_j+i0$, we see that $\tilde m_j\in \tilde M_j\cap\tilde M^\perp$, $\langle \tilde m_j, \tilde m_k\rangle = \langle  m_j, m_k\rangle$ and, in particular, $\sum\|\tilde m_j\|^2=\sum\|m_j\|^2\neq 0$. Consequently, $1-\varepsilon\leq\cos(\tilde M_1,\ldots,\tilde M_r)\leq1$ holds for each $\varepsilon >0$, which shows that $\cos(\tilde M_1,\ldots,\tilde M_r)=1$, as claimed.

Now consider $\tilde T:=\tilde P_r\ldots \tilde P_1$, which is, in fact, the complexification of $T$, that is, $\tilde T (x+iy)=Tx+iTy$ for each $x+iy\in\mathcal H_{\mathbb C}$. By applying \cite[Theorem 4.3]{BadeaSeifert2016} to $\tilde M, \tilde M_1,\ldots,\tilde M_r\subseteq \mathcal H_{\mathbb C}$ and $\tilde T$, we get that there is a dense subspace $\tilde X\subseteq\mathcal H_{\mathbb C}$ on which $\tilde T^k$ converges super-polynomially fast to $\tilde P$. Consequently, for each $x+iy\in\tilde X$ and $\alpha >0$, we have
\begin{align}\label{eq:pr:dichotomyBS}\nonumber
  \frac{\|T^k(x) - P(x)\|}{k^{\alpha}} &\leq
  \frac{\|T^k(x) - P(x)\|^2+ \|T^k(y)-P(y)\|^2}{k^{2\alpha}}\\
  &= \frac{\|\tilde T^k (x+iy)-\tilde P(x+iy)\|^2}{k^{2\alpha}}
  \longrightarrow 0 \quad \text{ as } k\rightarrow\infty.
\end{align}
Consider $X:=\{x\in\mathcal H\mid x+iy\in\tilde X \text{ for some } y\in\mathcal H\}$ and observe that since $\tilde X$ is a dense linear subspace of $\mathcal H_{\mathbb C}$, the space $X$ is a dense linear subspace of $\mathcal H$. Moreover, the super-polynomial convergence of $T^k$ to $P$ on $X$ follows from \eqref{eq:pr:dichotomyBS}.
\end{proof}

\vspace{2em}
\noindent\textbf{Acknowledgements.} We would like to thank Catalin Badea and David Seifert for kindly providing us with the complexification argument presented in the Appendix. We are also very grateful to an anonymous referee for pertinent comments and helpful suggestions.

\vspace{2em}
\noindent\textbf{Funding.} This research was supported in part by the Israel Science Foundation (Grant~389/12), the Fund for the Promotion of Research at the Technion and by the Technion General Research Fund.

\small
\bibliographystyle{siam}
\bibliography{references}

\end{document}